\def\acts{\curvearrowright}
\newtheorem{thm}{Theorem}[section]
\newtheorem*{thm*}{Theorem}
\newtheorem{lem}[thm]{Lemma}
\newtheorem*{prob*}{Problem}
\newtheorem{prop}[thm]{Proposition}
\newtheorem*{prop*}{Proposition}
\newtheorem{cor}[thm]{Corollary}
\newtheorem*{cor*}{Corollary}
\theoremstyle{definition}
\newtheorem{defn}[thm]{Definition}
\newtheorem*{defn*}{Definition}
\newtheorem{remark}[thm]{Remark}
\newtheorem{question}[thm]{Question}
\newtheorem*{question*}{Question}
\newtheorem*{Pquestion*}{Popa's question}
\newtheorem*{conv*}{Convention}
\newcommand{\norm}[1]{{\left\lVert #1\right\rVert}}
\def\del{\partial}
\def\dotminussym#1#2{%
  \setbox0=\hbox{$\m@th#1-$}%
  \kern.5\wd0%
  \hbox to 0pt{\hss\hbox{$\m@th#1-$}\hss}%
  \raise.6\ht0\hbox to 0pt{\hss$\m@th#1.$\hss}%
  \kern.5\wd0}
\DeclareMathOperator{\supp}{supp}
\newcommand{\mc}{\mathcal}
\newcommand{\mb}{\mathbb}
\begin{document}

\title{ Proper proximality among various families of groups}
\author{Changying Ding and Srivatsav Kunnawalkam Elayavalli}

\email{changying.ding@vanderbilt.edu}
\urladdr{https://cding.me}

\email{srivatsav.kunnawalkam.elayavalli@vanderbilt.edu}
\urladdr{https://sites.google.com/view/srivatsavke}
\address{Department of Mathematics\\Vanderbilt University, 1326 Stevenson Center, Station B 407807, Nashville, TN 37240}
\maketitle

\begin{abstract}
    In this paper, the notion of proper proximality (introduced in \cite{BIP18}) is studied and classified in various families of groups. We show that if a group acts non-elementarily by isometries on a tree such that for any two edges, the intersection of their edge stabilizers 
    is finite, then $G$ is properly proximal. 
    We show that the wreath product $G\wr H$ is properly proximal if and only if $H$ is non-amenable. 
    We then completely classify proper proximality among graph products of non-trivial groups. Our results generalize  recent work of Duchesne, Tucker-Drob and Wesolek classifying inner amenability for these families of groups. Our results also recover some rigidity results associated to the group von Neumann algebras, by virtue of being properly proximal. A key  idea in the proofs of our theorems is a technique to upgrade from relative proper proximality using computations in the double dual of the small at infinity boundary.  
\end{abstract}

\section{Introduction and statements of main results}

The goal of this paper is to provide several new examples of \emph{properly proximal} groups. The authors of \cite{BIP18} who introduced this property were motivated by the program of classifying group von Neumann algebras. 
Proper proximality is a dynamical/geometric property by nature, so it is independently of interest to group theorists and geometers. One advantage of proper proximality is that it applies to a robust family of non-amenable groups, including all lattices in non compact semisimple Lie groups.
This, in particular, allowed the authors of \cite{BIP18} to demonstrate the first $W^*$-strong rigidity results for compact actions of higher rank lattices:

\begin{thm}[Theorem 1.1 in \cite{BIP18}]\label{BIP theorem}
For all properly proximal groups $G$, the group von Neumann algbra $L(G)$ has no weakly compact Cartan subalgebras in the sense of Ozawa and Popa \cite{OzPo10}. Moreover, for any free ergodic probability measure preserving (p.m.p.) action $\sigma: G\acts (X, \mu)$, the crossed product $L^\infty(X, \mu)\rtimes G$ admits a weakly compact Cartan subalgebra $A$
if and only if $\sigma$ is weakly compact and, in this case, $A$ is unitary conjugate with $L^\infty(X, \mu)$.
\end{thm}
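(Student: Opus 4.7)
The strategy is to combine the dynamical characterization of proper proximality with the Ozawa--Popa deformation/rigidity machinery for weakly compact actions on Cartan subalgebras, as developed in \cite{OzPo10}. First I would use the formulation of proper proximality as the absence of a $G$-invariant mean on a certain quotient of $\ell^\infty(G)$ by the ``small at infinity'' subalgebra (functions $f$ with $g\cdot f - f \in c_0(G)$ for every $g$). The non-existence of such a mean is the abstract input we will aim to contradict given a weakly compact Cartan.

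Set $M := L^\infty(X,\mu) \rtimes_\sigma G$ and suppose $A \subseteq M$ is a Cartan subalgebra whose normalizer acts weakly compactly on $A$. Following \cite{OzPo10}, weak compactness gives a net of tracial vectors $\eta_n$ in $L^2(A) \overline{\otimes} L^2(A)$ that are almost fixed by conjugation with every normalizing unitary, in particular by the canonical unitaries $u_g$ and by $L^\infty(X,\mu)$. From these $\eta_n$ I would construct, via matrix coefficients and an ultrafilter limit, a state $\phi$ on $\ell^\infty(G)$ encoding the asymptotic behavior of the $G$-action on $A$.

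The key step is to show that if $\sigma$ is not weakly compact, $\phi$ descends to a $G$-invariant mean on the small-at-infinity quotient, contradicting proper proximality; hence $\sigma$ must be weakly compact. In that case I would then apply Popa's intertwining-by-bimodules, using the almost-invariance of $\eta_n$, to obtain $A \preceq_M L^\infty(X,\mu)$, and conclude via a standard Cartan-uniqueness argument that $A$ is unitarily conjugate to $L^\infty(X,\mu)$. The first statement, concerning $L(G)$, is recovered as the specialization to $X$ a point: the only candidate weakly compact Cartan there would be $\mathbb{C}$, which fails to be a Cartan subalgebra, so no such $A$ can exist.

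The main obstacle will be the construction and control of the state $\phi$. One must orchestrate the approximate commutation of $\eta_n$ with $u_g$ and with $L^\infty(X,\mu)$ to produce a state that is simultaneously $G$-invariant and trivial on the small-at-infinity ideal; heuristically, this requires the weak compactness hypothesis to overwhelm the asymmetry between left and right regular representations in a precise quantitative way, so that smallness at infinity in $\ell^\infty(G)$ is detected by the asymptotics of $\eta_n$ at normalizing unitaries. The intertwining step in the weakly compact case is more standard, though the Cartan uniqueness conclusion still requires care in the crossed-product setting.
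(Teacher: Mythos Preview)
This theorem is not proved in the paper at all: it is quoted verbatim as Theorem~1.1 of \cite{BIP18} and serves purely as motivation for why establishing proper proximality of a group has von Neumann algebraic consequences. There is therefore no ``paper's own proof'' to compare your proposal against.

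As a side remark on your sketch itself: the broad shape (use weak compactness of the Cartan to manufacture a state, then contradict proper proximality) is the right intuition for how \cite{BIP18} proceeds, but your description of the target space is imprecise. Proper proximality is the non-existence of a left $G$-invariant state on the \emph{right}-$G$-invariant part of $\ell^\infty(G)/c_0(G)$ (equivalently, on its bidual), not on a ``small at infinity'' quotient in the sense you describe; the latter is closer to biexactness. The actual argument in \cite{BIP18} passes through a boundary-type $G$-space and a careful analysis of the normalizer action, rather than directly building a mean on $\ell^\infty(G)$ from the Ozawa--Popa vectors. If you intend to reconstruct that proof, you should work with the compact $G$-space $K$ (or the operator system $\mathbb S(G)$) from \cite{BIP18} rather than the formulation you wrote.
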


Our first main result proves proper proximality for a family of groups acting on trees. Unless otherwise mentioned, all groups in this paper are \emph{countable}.

\begin{thm}\label{main theorem action on trees}
Let an infinite countable $G$ act on a countably infinite tree $T$ such that:
\begin{enumerate}
    \item the action is non-elementary on the Bowditch compactification $\Delta T$;
    \item for any pairs of distinct edges $e,f\in E({T})$, one has $\mathrm{Stab}(e)\cap\mathrm{Stab}(f)$ is finite.
\end{enumerate} Then $G$ is properly proximal.
\end{thm}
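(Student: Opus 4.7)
The plan is to exhibit $\Delta T$ (together with its end boundary $\partial T \subset \Delta T$) as a $G$-space whose dynamical and structural properties are strong enough to deduce proper proximality via the machinery of \cite{BIP18}. Hypothesis (1) will supply north--south dynamics from loxodromic elements, while hypothesis (2) will force the stabilizers of pairs of boundary points to be amenable.

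First I would extract loxodromic dynamics from the non-elementary action on $\Delta T$: there exist $g\in G$ translating bi-infinite geodesic axes in $T$ with distinct attracting and repelling endpoints $g^{\pm}\in\partial T$, and moreover one can find independent pairs $g,h$ whose fixed-point sets $\{g^{\pm}\},\{h^{\pm}\}$ are pairwise disjoint. Standard ping-pong then yields the usual north--south dynamics on $\partial T$: for each loxodromic $g$, the iterates $g^n$ uniformly carry $\partial T\setminus V$ into $U$ for any open neighbourhoods $V\ni g^-$ and $U\ni g^+$.

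Second, I use hypothesis (2) to control pair-stabilizers. An element $h\in G$ fixing two distinct ends $\xi,\eta\in\partial T$ must preserve the unique bi-infinite geodesic $\gamma=\gamma(\xi,\eta)\subset T$. An infinite-order such $h$ translates $\gamma$ by a fixed positive amount, so the infinite-order elements form a cyclic group; a torsion $h$ fixes $\gamma$ pointwise and therefore lies in $\mathrm{Stab}(e)\cap\mathrm{Stab}(f)$ for any two edges $e,f\subset\gamma$, which is finite by (2). Hence $\mathrm{Stab}_G(\xi,\eta)$ is virtually cyclic, in particular amenable. With non-elementary convergence-type dynamics on a compact $G$-space together with amenable pair-stabilizers in hand, I would conclude via a criterion from \cite{BIP18}: such data allow the construction, via the ping-pong dynamics of sequences of loxodromic elements going to infinity in $G$, of an equivariant device (typically a map $\beta G\setminus G\to\mathrm{Prob}(\Delta T)$ with appropriate transversality) whose behaviour precludes any conjugation-invariant mean on the small-at-infinity functions on $G$, which is exactly proper proximality.

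The main obstacle I anticipate is in this last step: transferring the geometric/dynamical input on $\Delta T$ into the analytic non-existence of conjugation-invariant means, in the absence of cocompactness of $G\curvearrowright T$ and with $\Delta T$ possibly carrying points beyond the end boundary (e.g.\ coming from infinite edge or vertex stabilizers). The precise strength of hypothesis (2) --- that edge stabilizers, though possibly individually infinite, have only finite pairwise intersection --- should be exactly what is needed to handle orbits of such potentially large cell stabilizers inside $\Delta T$ and to rule out that a hypothetical conjugation-invariant mean on the small-at-infinity functions concentrates on them; executing this reduction carefully will be the heart of the argument.
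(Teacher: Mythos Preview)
Your proposal has a genuine gap at precisely the point you yourself flag as ``the heart of the argument''. The action $G\curvearrowright\Delta T$ is \emph{not} a convergence action in general: if an edge stabilizer $\mathrm{Stab}(e)$ is infinite, then any sequence $(g_n)$ inside it fixes the two endpoints of $e$ in $\Delta T$, so no north--south collapse takes place. Thus the north--south dynamics you describe only hold for sequences going to infinity \emph{relative to the family of edge stabilizers}, not for all sequences going to infinity in $G$. There is no criterion in \cite{BIP18} that turns ``convergence-type dynamics plus amenable pair-stabilizers'' into proper proximality once the dynamics fail along some infinite subgroups; Example~4.6 there really needs a genuine convergence action. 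Your observation that $\mathrm{Stab}_G(\xi,\eta)$ is virtually cyclic for distinct $\xi,\eta\in\partial T$ is correct, but it does not plug into any available black box. (A side remark: proper proximality is the non-existence of a \emph{left}-invariant state on the \emph{right}-invariant part of $\ell^\infty(G)/c_0(G)$, not a conjugation-invariant mean on small-at-infinity functions; the latter phrasing is closer to inner amenability or bi-exactness.)

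The paper takes a two-step route that addresses this directly. First, the north--south dynamics on $\Delta T$ is established for sequences $g_n\to\infty/\mathcal S$ with $\mathcal S=\{\mathrm{Stab}(e)\mid e\in E(T)\}$; together with a diffuse measure coming from non-elementarity, this yields that $G$ is properly proximal \emph{relative to} $\mathcal S$. Second --- and this is where hypothesis (2) actually enters --- one works in the double dual $\big((\ell^\infty(G)/c_0(G))^{**}\big)^{G_r}$ and forms, for each edge $e$ (with stabilizer $tH_it^{-1}$), the right-invariant projection $p_{t,i}=\bigvee_{s\in G}1_{tH_is}$. The finiteness of $\mathrm{Stab}(e)\cap\mathrm{Stab}(f)$ for $e\neq f$ forces these projections to be pairwise orthogonal, so $\delta_e\mapsto p(e)$ defines a $G$-equivariant unital embedding $\ell^\infty(E(T))\hookrightarrow\big((\ell^\infty(G)/c_0(G))^{**}\big)^{G_r}$. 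Any hypothetical left-$G$-invariant state must, by step one, live on $\bigvee p(e)$, and then pushes forward via $C(\Delta T)\subset\ell^\infty(V(T))\hookrightarrow\ell^\infty(E(T))$ to a $G$-invariant probability measure on $\Delta T$, contradicting non-elementarity. The role of (2) is thus not to control boundary pair-stabilizers but to make the edge-indexed projections orthogonal in the double dual.
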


The most natural examples of groups with the above phenomenon arise from fundamental groups of graphs of groups, via Bass-Serre theory. Particular cases of these include amalgamated free products and HNN-extensions.

A subgroup $H$ is almost malnormal in $G$ if for any $g\in G\setminus H$ one has $|gHg^{-1}\cap H|<\infty$. 

\begin{cor}\label{main theorem amalgamated product}
If $G=G_1*_HG_2$ is a countable group such that $H$ is almost malnormal in $G$ and $[G_1:H]\geq 3$, $[G_2:H]\geq 2$, then $G$ is properly proximal.

\end{cor}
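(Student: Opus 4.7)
The strategy is to apply Theorem 1.1 to the action of $G = G_1*_H G_2$ on its Bass-Serre tree $T$, whose vertex set decomposes as $V(T) = G/G_1 \sqcup G/G_2$ and whose edge set is $E(T) = G/H$, with $G$ acting by left multiplication. In this setup vertex stabilizers are conjugates of $G_1$ or $G_2$, edge stabilizers are conjugates of $H$, and $T$ is countably infinite because $G$ is countable and $[G_1:H]\geq 3$.

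Verifying hypothesis (2) is where the almost malnormality of $H$ enters. For distinct edges $e = gH$ and $f = g'H$ we have $g^{-1}g'\notin H$, so
\[
\mathrm{Stab}(e)\cap\mathrm{Stab}(f) \;=\; gHg^{-1}\cap g'H(g')^{-1} \;=\; g\bigl(H\cap (g^{-1}g') H (g^{-1}g')^{-1}\bigr)g^{-1}
\]
is finite by hypothesis.

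For hypothesis (1), the conditions $[G_1:H]\geq 3$ and $[G_2:H]\geq 2$ rule out the degenerate case in which $T$ is a bi-infinite line and $G$ is virtually infinite cyclic (the $\mathbb Z/2*\mathbb Z/2$ situation). Under our assumptions every vertex of $T$ has valence at least $2$ and every vertex in $G/G_1$ has valence at least $3$. Using the standard normal form for elements of an amalgamated free product, any alternating product $g_1g_2$ with $g_i\in G_i\setminus H$ acts loxodromically on $T$; exploiting the branching at degree-$\geq 3$ vertices, I would produce two such loxodromic elements whose axes share no endpoint in $\partial T$. This yields a non-elementary action on $T\cup\partial T$ in the usual sense.

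The main technical hurdle is matching this classical non-elementarity with the paper's formulation in terms of the Bowditch compactification $\Delta T$, especially when $[G_1:H]$ or $[G_2:H]$ is infinite and $T$ fails to be locally finite. I would expect this to reduce to a general structural fact — namely that for a group acting on a tree with no invariant end or invariant pair of ends the induced action on $\Delta T$ is non-elementary — and so should be extracted from the paper's preliminaries on the Bowditch compactification rather than re-proved in the course of deducing the corollary.
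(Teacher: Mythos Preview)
Your proposal is correct and follows exactly the route the paper intends: the corollary is stated without a separate proof and is meant to be read off from Theorem~\ref{main theorem action on trees} applied to the Bass--Serre tree, with almost malnormality of $H$ giving condition~(2) precisely as you compute. Your only over-elaboration is the non-elementarity check: the paper's definition of non-elementary is simply ``no invariant subset of $\Delta T$ of cardinality at most $2$'', and in the closely related Corollary~\ref{properly proximal relative to amalgams} the paper disposes of this in one line (``the boundary is infinite and orbits of the action on the boundary are infinite''), so your loxodromic/ping-pong argument and your worry about the Bowditch compactification in the non-locally-finite case are more machinery than is needed here.
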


\begin{cor}\label{main theorem HNN ext}
If $G*_{H\sim K}$ is an HNN-extension of a countable group $G$ over almost malnormal subgroups $H$ and $K$, and $[G:H]
\geq 3$, then $G*_{H\sim K}$ is properly proximal.
\end{cor}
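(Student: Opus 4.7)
The plan is to apply Theorem~\ref{main theorem action on trees} to the natural isometric action of $\Gamma := G *_{H\sim K}$ on its Bass--Serre tree $T$. Writing $t$ for the stable letter with $tHt^{-1}=K$, the vertex set of $T$ is $\Gamma/G$, the edge set is $\Gamma/H$, and the edge $\gamma H$ joins $\gamma G$ to $\gamma t^{-1}G$. Vertex stabilizers are conjugates of $G$ and edge stabilizers are conjugates of $H$, so the two hypotheses of Theorem~\ref{main theorem action on trees} amount to: (i) the action on $\Delta T$ is non-elementary, and (ii) $H$ is almost malnormal in $\Gamma$.

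For (i), the stable letter $t$ acts hyperbolically on $T$ with axis $\{t^n G : n \in \mathbb{Z}\}$. Using $[G:H]\ge 3$, I would choose $g \in G\setminus H$ so that $gt^{-1}G$ is a neighbor of $G$ distinct from $t^{-1}G$; then $tg$ is Britton-reduced and hence hyperbolic, and since $g$ moves the vertex $t^{-1}G$ off the axis of $t$, the axes of $t$ and $tg$ share only a bounded segment. A standard ping-pong argument applied to sufficiently high powers then produces two independent hyperbolic elements, so the action on $\partial T = \Delta T$ is non-elementary.

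For (ii), after translating one may assume the two edges in question are $H$ and $\gamma H$ with $\gamma\in\Gamma\setminus H$, so the claim reduces to $|H \cap \gamma H\gamma^{-1}| < \infty$ for every such $\gamma$. When $\gamma \in G\setminus H$, this is precisely almost malnormality of $H$ in $G$. When $\gamma\notin G$, I would write $\gamma = g_0 t^{\epsilon_1}g_1\cdots t^{\epsilon_n}g_n$ in Britton-reduced normal form with $n\ge 1$, and for $h\in H\cap \gamma H\gamma^{-1}$ examine the reduction of the word $\gamma^{-1}h\gamma$, which must lie in $H\le G$. Britton's Lemma applied at the innermost pinch forces $g_0^{-1}hg_0 \in K$ if $\epsilon_1=1$, or $g_0^{-1}hg_0 \in H$ if $\epsilon_1=-1$; because the word was Britton-reduced, $g_0$ lies outside the relevant subgroup, so almost malnormality of that subgroup in $G$ restricts $h$ to a finite set at this pinch. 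Iterating this argument outward through all $n$ pinches then yields the required finiteness.

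I expect the main technical difficulty to lie in this last step: carefully tracking the alternation between $H$ and $K$ at successive pinches of the Britton reduction and verifying that the Britton-reduced hypothesis indeed supplies at each stage a conjugating element outside the appropriate subgroup so that almost malnormality applies. Once (i) and (ii) are confirmed, Theorem~\ref{main theorem action on trees} immediately yields the proper proximality of $\Gamma = G *_{H\sim K}$.
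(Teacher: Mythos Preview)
Your overall approach---applying Theorem~\ref{main theorem action on trees} to the Bass--Serre tree of the HNN extension---is exactly what the paper intends, and your treatment of non-elementarity is adequate.

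There is, however, a genuine gap in your verification of condition~(2). Your assertion that ``because the word was Britton-reduced, $g_0$ lies outside the relevant subgroup'' is false: the Britton normal form places no restriction whatsoever on the outermost letters $g_0$ and $g_n$; only the intermediate $g_i$ (for $1\le i\le n-1$) are constrained, and only when $\epsilon_i,\epsilon_{i+1}$ have opposite signs. More seriously, even granting $g_0\notin K$, in the case $\epsilon_1=1$ you obtain $h\in g_0Kg_0^{-1}\cap H$, and almost malnormality of $K$ alone says nothing about this intersection---it controls $g_0Kg_0^{-1}\cap K$, not $g_0Kg_0^{-1}\cap H$. The same cross-term obstruction reappears at later pinches whenever two consecutive exponents have the \emph{same} sign, so ``iterating outward'' does not avoid it. Concretely, if one takes $K=H$ infinite with the identity isomorphism, then $H$ and $K$ are each almost malnormal in $G$, yet the edges $H$ and $tH$ are distinct with $\mathrm{Stab}(H)\cap\mathrm{Stab}(tH)=H\cap tHt^{-1}=H$ infinite, so condition~(2) of Theorem~\ref{main theorem action on trees} genuinely fails.

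The fix is to read the hypothesis as almost malnormality of the \emph{family} $\{H,K\}$ in $G$, i.e.\ to also require $|gHg^{-1}\cap K|<\infty$ for every $g\in G$. With this in hand there is a much cleaner route than Britton reductions: since $T$ is a tree, any element fixing two edges fixes every edge on the geodesic between them, so it suffices to check condition~(2) for \emph{adjacent} edges. The edges incident to the base vertex $G$ have stabilizers $gHg^{-1}$ (for $gH\in G/H$) and $g'Kg'^{-1}$ (for $g'K\in G/K$), and the three possible types of pairwise intersection are finite by almost malnormality of $H$, of $K$, and by the cross condition, respectively.
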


\begin{remark}
A group is said to be a \emph{convergence group} if it admits a non-elementary convergence action (see \cite{Bow99}). In \cite{Dah03} and \cite{Osi04}, many amalgamated products and HNN-extensions are shown to be convergence groups and hence properly proximal (see Example 4.6 in \cite{BIP18}) . However, the above corollaries include examples that are not non-elementary convergence groups. For instance, consider $G=(G_1 \wr  \Gamma )*_{ \Gamma}( G_2\wr \Gamma)$, where $G_1$ and $G_2$ are non-trivial amenable groups and $\Gamma $ is any infinite countable group. Suppose there exists a non-elementary convergence action $G\curvearrowright K$ on some compact space $K$, then as $\oplus_{\Gamma}G_i$ is infinite amenable for $i=1,2$, there must exist $a_i,b_i\in K$ such that $\{a_i,b_i\}$ is fixed by $\oplus_{\Gamma}G_i$ set-wise. Since the normalizer of $\oplus_{\Gamma}G_i$ is $G_i\wr \Gamma$, we have $G_i\wr \Gamma$ fixes $\{a_i,b_i\}$ as well. Now taking any infinite sequence in $\Gamma$, we see from north south dynamics that $\{a_1,b_1\}=\{a_2,b_2\}$, hence, $G$ fixes $\{a_1,a_2\}$ which contradicts the fact that the action is non-elementary.
\end{remark}

\begin{remark}
We would like to point out for the particular cases of groups in Corollaries \ref{main theorem amalgamated product} and \ref{main theorem HNN ext}, the consequence of Theorem \ref{BIP theorem} is weaker  than what is already known in the literature, due to work of Ioana in \cite{Io15}, and subsequently Vaes in \cite{Vae13}. 
\end{remark}




Our second main result is the following: 

\begin{thm}\label{wreath product}
Let $G$ be a non-trivial group and $H$ a infinite group. Then $G\wr H$ is properly proximal if and only if $H$ is non-amenable.
\end{thm}

The above result in combination with Theorem \ref{BIP theorem} shows absence of weakly compact Cartan subalgebras in Bernoulli shift crossed products of non-amenable groups. This result is in the flavor of the well known open question (Problem III in \cite{Io18}) of whether such crossed products have unique Cartan subalgebra up to unitary conjugation. 

The proofs of the above two results use the double dual characterization of proper proximality to upgrade from proper proximality relative to certain malnormal subgroups. 

Our third main  result is a complete classification for graph products of non-trivial groups.  We provide the following algorithm to decide if a graph product of groups is properly proximal:

\begin{thm}\label{main theorem introduction 3}
Let $\mc{G}_{pp}$ be the collection of graph products of non-trivial groups, defined recursively: 
\begin{enumerate}
    \item If $\Gamma$ satisfies the condition that the radius of $\Gamma$ (see Section 2.1 for the definition), is at least 2 
    and there does not exist a pair of vertices $v_1,v_2\in V(\Gamma)$ such that $(u,v_i)\in E(\Gamma)$ for all $u\in V(\Gamma)\setminus \{v_1,v_2\}$, and $i=1,2$  then $\Gamma(G)\in \mc{G}_{pp}$, for any arbitrary choice of non-trivial groups $G_v$.
    \item If $r(\Gamma)=2$ and there are vertices $v_1, v_2\in V(\Gamma)$ such that $(u,v_i)\in E(\Gamma)$ for all $i\in \{1,2\}$, $u\in V(\Gamma)\setminus \{v_1,v_2\}$, and $|G_{v_i}|\geq 3$ for some $i=1,2$, then $\Gamma_S(G)\in \mc{G}_{pp}$, where $\Gamma_S$ is the subgraph induced by the vertices $S=V(\Gamma)\setminus \{v_1,v_2\}$.
    \item If $r(\Gamma)=1$ with some $v\in V(\Gamma)$ such that $(v,u)\in E(\Gamma)$ for all $u\neq v$, and $G_v$ is properly proximal, then $\Gamma_{T}(G)\in \mc{G}_{pp} $, where $\Gamma_T$ is the induced subgraph by $T=V(\Gamma)\setminus \{v\}$.
 \end{enumerate}
 Then, a graph product $\Gamma(G)$ is properly proximal if and only if $\Gamma(G)\in \mc{G}_{pp}$.
\end{thm}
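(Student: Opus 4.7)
Our plan is to prove both directions of Theorem~\ref{main theorem introduction 3} by induction on $|V(\Gamma)|$, mirroring the recursive structure of $\mathcal{G}_{pp}$. For the forward direction, Cases~2 and~3 of the recursion yield natural direct-product decompositions of $\Gamma(G)$: in Case~3 the universal vertex $v$ gives $\Gamma(G)\cong G_v\times \Gamma_T(G)$; in Case~2, since a jointly universal pair with $r(\Gamma)=2$ must be non-adjacent, $\Gamma(G)\cong (G_{v_1}*G_{v_2})\times \Gamma_S(G)$, and the size hypothesis forces $G_{v_1}*G_{v_2}$ to contain a finite-index non-abelian free subgroup (hence to be properly proximal). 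Combining these with the closure of proper proximality under direct products of properly proximal groups (from \cite{BIP18}) and the inductive hypothesis on the reduced graph product handles these cases. For the converse direction, if $\Gamma(G)\notin\mathcal{G}_{pp}$ the recursion must fail at some step: a universal vertex $v$ has $G_v$ not properly proximal, or a jointly universal pair has both $|G_{v_i}|=2$ producing an infinite dihedral factor $G_{v_1}*G_{v_2}\cong D_\infty$, or by induction the reduced graph product is not properly proximal. In each situation, $\Gamma(G)$ admits a direct-product decomposition with a non-properly-proximal factor, so proper proximality of $\Gamma(G)$ fails.

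The terminal Case~1 is the substantive step. Under its hypotheses (no universal vertex; no jointly universal pair), we aim to apply Theorem~\ref{main theorem action on trees} to an appropriate action of $\Gamma(G)$ on a tree. If $\Gamma$ decomposes as a non-trivial join $\Gamma_1 * \Gamma_2$ --- which under Case~1 forces both $|V(\Gamma_i)|\geq 3$ --- we use the factorization $\Gamma(G)\cong\Gamma_1(G)\times\Gamma_2(G)$ and reduce to induction on each factor after verifying each remains in $\mathcal{G}_{pp}$. If $\Gamma$ is disconnected, then $\Gamma(G)$ is a free product and Theorem~\ref{main theorem action on trees} applies directly to its Bass-Serre tree, whose edge stabilizers are trivial. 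The hardest subcase is when $\Gamma$ is connected and irreducible (not a join); here we plan to select a vertex $v$ whose link $\mathrm{Link}(v)$ is not dominated by any other vertex of $\Gamma$, and to build a splitting of $\Gamma(G)$ whose Bass-Serre tree satisfies both the non-elementarity on the Bowditch compactification and the finite edge-stabilizer-intersection condition required by Theorem~\ref{main theorem action on trees}.

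The main obstacle in this final subcase will be the edge-stabilizer-intersection hypothesis. In the naive visual splitting $\Gamma(G)\cong(G_v\times\Gamma_{\mathrm{Link}(v)}(G)) *_{\Gamma_{\mathrm{Link}(v)}(G)} \Gamma_{V\setminus v}(G)$, the vertex subgroup $G_v$ centralizes the edge group $\Gamma_{\mathrm{Link}(v)}(G)$ and therefore produces distinct edges of the Bass-Serre tree with identical stabilizers, forcing the pairwise edge-stabilizer intersection to equal the full (infinite) edge group whenever $\mathrm{Link}(v)\neq\emptyset$. Overcoming this requires either enlarging the edge group to absorb its normalizer --- replacing the visual splitting by a graph-of-groups decomposition whose edge group is self-normalizing --- or, for the irreducible subcase, invoking the acylindrical hyperbolicity of graph products over non-join graphs and its consequence for proper proximality as an alternative to Theorem~\ref{main theorem action on trees}. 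The delicate combinatorial input is that the Case~1 hypotheses preclude the existence of any universal or jointly universal vertex pair, which is precisely what would otherwise obstruct a choice of splitting whose edge group has controlled normalizer.
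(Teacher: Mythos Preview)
Your handling of Cases~2 and~3 is broadly correct, but note that the implication ``if $G_1\times G_2$ is properly proximal then each $G_i$ is properly proximal'' is \emph{not} in \cite{BIP18}; only the converse (Proposition~4.10 there) is. The paper proves this direction as Proposition~\ref{Products are properly proximal iff summands are properly proximal}, and you rely on it implicitly in your reverse direction without justifying it.

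The real gap is Case~1. You correctly identify that the natural visual splitting of $\Gamma(G)$ at a vertex $v$ has edge group $\Gamma_{S_v}(G)$ centralized by $G_v$, so distinct edges of the Bass--Serre tree can have the same stabilizer and the finite-intersection hypothesis of Theorem~\ref{main theorem action on trees} fails. Your suggested fixes---enlarging the edge group to its normalizer, or appealing to acylindrical hyperbolicity---are not carried out, and neither produces an action satisfying condition~(2) of Theorem~\ref{main theorem action on trees} in general. The paper does \emph{not} apply Theorem~\ref{main theorem action on trees} here at all. Instead it proves Case~1 as Proposition~\ref{main theorem introduction 1} by a genuinely different mechanism: for \emph{every} vertex $v\in V(\Gamma)$ simultaneously, the decomposition $\Gamma(G)\cong (G_v\times\Gamma_{S_v}(G))\ast_{\Gamma_{S_v}(G)}\Gamma_{V(\Gamma)\setminus\{v\}}(G)$ together with the index assumptions (forced by the Case~1 hypotheses) gives, via Corollary~\ref{properly proximal relative to amalgams}, that $\Gamma(G)$ is properly proximal \emph{relative to} $\Gamma_{S_v}(G)$. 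One then proves a combinatorial covering lemma (Lemmas~\ref{intersection} and~\ref{boundary pieces associated to stars union to the whole piece}) showing $\bigcup_{v\in V(\Gamma)} X_{\Gamma_{S_v}(G)}=\Delta\Gamma(G)\setminus\Gamma(G)$, i.e.\ the boundary pieces associated to the links jointly exhaust the Stone--\v{C}ech boundary. Proper proximality then follows from item~(2) of Definition~\ref{relative proper proximality with states}. The point is that no single splitting sees the whole picture---the malnormality/finite-intersection condition is replaced by the fact that finitely many relative statements, one per vertex, cover everything. Your proposal misses this idea entirely.
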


Note that since proper proximality implies non-inner amenability, the above theorem generalizes Theorem 4.14 in \cite{DTW19} where the same classification result is obtained in the context of inner amenability.

Note also that from Theorem \ref{main theorem introduction 3}, we immediately deduce:
\begin{cor}
Arbitrary finite graph products of properly proximal groups are properly proximal.
\end{cor}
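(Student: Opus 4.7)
The plan is to argue by induction on $|V(\Gamma)|$, using the recursive description of $\mc{G}_{pp}$ in Theorem \ref{main theorem introduction 3}. The base case $|V(\Gamma)|=1$ is immediate since $\Gamma(G)=G_v$ is properly proximal by hypothesis. For the inductive step on a graph $\Gamma$ with $n\geq 2$ vertices and all $G_v$ properly proximal, I would determine which clause in the definition of $\mc{G}_{pp}$ applies. If $r(\Gamma)\geq 2$ and no pair $v_1,v_2$ has each $v_i$ adjacent to every vertex of $V(\Gamma)\setminus\{v_1,v_2\}$, clause (1) directly places $\Gamma(G)\in\mc{G}_{pp}$. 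If $r(\Gamma)=2$ and such a pair does exist, clause (2) applies: its cardinality hypothesis $|G_{v_i}|\geq 3$ is automatic because properly proximal groups are non-amenable, hence infinite. The clause reduces the problem to the induced subgraph $\Gamma_S$ with $S=V(\Gamma)\setminus\{v_1,v_2\}$, whose vertex groups remain properly proximal; the inductive hypothesis yields $\Gamma_S(G)\in\mc{G}_{pp}$, so that $\Gamma(G)\in\mc{G}_{pp}$. Finally, if $r(\Gamma)=1$ with a central vertex $v$, clause (3) applies since $G_v$ is properly proximal by assumption and reduces to $\Gamma_T$ with $T=V(\Gamma)\setminus\{v\}$, closed by induction.

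The main step requiring care is checking that these three clauses exhaust every finite graph. For connected $\Gamma$, the radius is finite and sits in one of the regimes $r=1$, $r=2$, or $r\geq 3$; in the last, any putative pair $v_1,v_2$ jointly dominating the other vertices would force $r(\Gamma)\leq 2$ (with $v_1$ as center), a contradiction, so clause (1) takes care of it. For disconnected $\Gamma$, one has $r(\Gamma)=\infty\geq 2$, and the existence of such a pair would force every other vertex to be adjacent to both $v_i$ and thus to lie in the same component, compelling $V(\Gamma)=\{v_1,v_2\}$. The only remaining edge case is two disjoint vertices, where $\Gamma(G)=G_{v_1}*G_{v_2}$ is a free product of two properly proximal, hence infinite, groups; this is properly proximal by Corollary \ref{main theorem amalgamated product} with $H$ trivial (which is vacuously almost malnormal). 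No genuinely new technique is needed beyond the classification theorem, so the main obstacle is only the bookkeeping of the case analysis and the automatic verification of the hypotheses of clauses (2) and (3) --- both of which follow at once from the fact that proper proximality passes to each vertex group and forces infinite cardinality.
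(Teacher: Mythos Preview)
Your argument is correct and is exactly the intended one: the paper offers no separate proof, simply noting that the corollary follows ``immediately'' from Theorem~\ref{main theorem introduction 3}, and what you have written is a careful unwinding of that implication via induction on $|V(\Gamma)|$. Your attention to the edge case of two isolated vertices (which does not sit cleanly inside the recursive clauses as literally stated) and your invocation of Corollary~\ref{main theorem amalgamated product} there are appropriate and in fact more precise than the paper.
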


Rigidity properties of graph products of groups has gathered considerable interest recently. For instance, \cite{CdSW} shows  primeness results for the group von Neumann algebras associated to many graph products of groups. Also, \cite{Ca20} shows strong solidity and/or absence of Cartan for classes of Hecke von Neumann algebras associated to some graph products. As a consequence of Theorem \ref{main theorem introduction 3}, we obtain several new examples of groups satisfying the conclusion of Theorem \ref{BIP theorem}.

Moreover, if one has a combination of weak amenability and proper proximality, we can deduce absence of Cartan subalgebras and $\mc{C}$-rigidity in the sense of \cite{PoVa14a}\footnote{By $\mc{C}$-rigidity for $G$, we mean $L^{\infty}(X,\mu) \rtimes G$ admits a unique Cartan subalgebra up to unitary conjugation, for any free ergodic pmp action $\sigma: G\acts (X,\mu)$. See \cite{PoVa14a} for more details.} (see \cite[Theorem 1.5]{BIP18}). In \cite{Re17} Reckwerdt shows that graph products of weakly amenable groups with Cowling-Haagerup constant $1$ continue to be weakly amenable. Therefore we obtain:

\begin{cor}\label{absence of cartan}
Let $\Gamma(G)\in \mc{G}_{pp}$ be as in Theorem \ref{main theorem introduction 3} and further assume that $G_v$ is weakly amenable for all $v\in V(\Gamma)$ with Cowling-Haagerup constant $1$. Then $L(\Gamma(G))$ admits no Cartan subalgebras. Moreover, $\Gamma(G)$ is $\mc{C}$-rigid.
\end{cor}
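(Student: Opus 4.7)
The plan is to assemble three ingredients, all of which are by this point on the shelf. First, the hypothesis that $\Gamma(G)\in \mc{G}_{pp}$ together with Theorem \ref{main theorem introduction 3} gives that the graph product $\Gamma(G)$ is properly proximal. Second, Reckwerdt's result \cite{Re17} asserts that the class of weakly amenable groups with Cowling--Haagerup constant $1$ is closed under graph products; so under the additional assumption that each vertex group $G_v$ is weakly amenable with $\Lambda_{cb}(G_v)=1$, we conclude $\Lambda_{cb}(\Gamma(G))=1$. Third, \cite[Theorem 1.5]{BIP18} states that if a countable group $G$ is properly proximal \emph{and} weakly amenable with $\Lambda_{cb}(G)=1$, then $L(G)$ has no Cartan subalgebra, and for every free ergodic p.m.p.\ action $G\acts (X,\mu)$ the crossed product $L^\infty(X,\mu)\rtimes G$ has a unique Cartan subalgebra up to unitary conjugacy, i.e.\ $G$ is $\mc{C}$-rigid.

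The corollary is then immediate: apply \cite[Theorem 1.5]{BIP18} to $\Gamma(G)$, using Theorem \ref{main theorem introduction 3} as input for proper proximality and \cite{Re17} as input for weak amenability with Cowling--Haagerup constant $1$. No new technical step is required beyond citation, and there is no genuine obstacle, since the substantive work has already been carried out in the proof of Theorem \ref{main theorem introduction 3}; the only thing to verify in writing up the proof is that the quantitative Cowling--Haagerup constant in Reckwerdt's theorem is indeed preserved to be exactly $1$ (and not merely finite), which is precisely what \cite{Re17} establishes.
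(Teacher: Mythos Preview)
Your proposal is correct and matches the paper's own reasoning exactly: the paper does not give a separate proof of this corollary but simply notes (in the paragraph preceding it) that combining Theorem~\ref{main theorem introduction 3}, Reckwerdt's result \cite{Re17}, and \cite[Theorem 1.5]{BIP18} yields the conclusion. There is nothing to add.
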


We ask the following general question in the context of our result above: 

\begin{question}
Which graph products of groups do not admit Cartan subalgebras in their group von Neumann algebras?
\end{question}

After circulating an initial version of this preprint, the second author and I. Chifan answered the above question in \cite{chifan2021cartan}.

\subsection*{Acknowledgements:} The authors are very grateful to their advisor J. Peterson his encouragement and several helpful comments. They also thank D. Osin for helpful  conversations. Thanks are due to R. Boutonnet, I. Chifan and A. Ioana for taking a look at our preprint and offering valuable advice and feedback.

\section{Preliminaries}

\subsection{Graph products of groups}
Throughout this paper we denote by $\Gamma$ a non-empty finite simple (every edge connects two distinct vertices) undirected graph, with vertex set $V(\Gamma)$ and edge set $E(\Gamma)$. For $v\in V(\Gamma)$, its link is $S_v=\{w\in V(\Gamma) \mid (v,w)\in E(\Gamma)\}$. For any two vertices $v,w\in V(\Gamma)$, let $d(v,w)\in \mb{N}\cup \{\infty\}$ be the length of the shortest path between $v$ and $w$.  For any subset $T\subset V(\Gamma)$, denote the induced subgraph on $T$ by $\Gamma_T$.

The \emph{radius} of a graph $\Gamma$ is given by $$r(\Gamma)= \inf_{v\in V(\Gamma)} \sup_{w\in V(\Gamma)} d(u,v).$$
And a graph $\Gamma$ is called \emph{irreducible} if the complement graph $\Gamma^c$ (i.e, the graph consisting of the same vertices, and $(u,v)\in E(\Gamma^c)$ if and only if $(u,v)\notin E(\Gamma)$) is connected


Given a $\Gamma$ be a graph, and $\{G_v\}_{v\in V(\Gamma)}$ a family of countable groups labeled by the vertex set of $\Gamma$, the \emph{graph product} denoted by $G= \Gamma(G)$ is the quotient group of the free product $*_{v\in V(\Gamma)} G_v$, with relations $[g,h]=1$ for all $g\in G_u$ and $h\in G_v$ with $(u,v)\in E(\Gamma)$.

\begin{defn}\label{reduced words in amalgamated free products}
Consider an amalgamated free product group $G_1*_H G_2$. Choose $T_i$ a transversal for the cosets $\{Hx: x\in G_i\}$. A normal word is a word $g=ht_1\hdots t_k$ where $h\in H$, $k\geq 0$ and $t_j\in T_{i_j}\setminus \{1\}$ for some $i_j\in \{1,2\}$ and $i_j\neq i_{j+1}$ for $1\leq j\leq k-1$.  
\end{defn}
In amalgamated free products, it is well-known that every element can also be represented by a unique normal word (see \cite[Theorem 3.7]{BH99}). As is the case for free products, elements in graph products also admit normal forms.
\begin{defn}[Definition 3.5 in \cite{Gr90}]\label{definition of reduced word}
Let $G= \Gamma\{G_v\}_{v\in V(\Gamma)}$ be a graph product of groups. A word $g_1g_2,\hdots g_n\in G$ is said to be \emph{reduced} if the following hold: 
\begin{enumerate}
    \item $g_i\in G_{v_i}$ for all $i\in \{1,2,\hdots n\} $, where $v_i\in V(\Gamma)$.
    \item $g_i\neq 1$ for all $i\in \{1,2,\hdots, n\}$. 
    \item $\forall i\leq k< j$ such that $$[g_i,g_{i+1}]= [g_i,g_{i+2}]= \cdots= [g_i,g_k] =1,$$ and $$[g_{k+1},g_j]=[g_{k+2},g_j]=\cdots= [g_{j-1},g_j]=1,$$ then $v_i\neq v_j$. 
\end{enumerate}
\end{defn}

\begin{thm}[Theorem 3.9 in \cite{Gr90}]\label{normal form theorem for graph products}
Let $G= \Gamma\{G_{v}\}_{v\in V(\Gamma)}$ be a graph products of groups. Then, each non-trivial element $g\in G$ can be uniquely (up to commuting segments) expressed as a product: $$g=g_1\hdots g_n $$ where $g_1\hdots g_n$ is a reduced word.
\end{thm}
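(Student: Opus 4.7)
The plan is to deduce existence by a direct reduction procedure, and uniqueness by constructing a left action of $G$ on the set of equivalence classes of reduced words (under swaps of adjacent commuting letters) that separates these classes.

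For existence, starting from any representation $w = g_1 \cdots g_n$ of a group element, I would iteratively apply two moves until no further move is possible: (i) if two adjacent letters lie in the same vertex group $G_v$, multiply them (deleting if the product is trivial); (ii) if for some indices $i < j$ the letters $g_i, g_j$ both lie in $G_v$ and every intervening $g_k$ (with $i < k < j$) lies in some $G_u$ with $(v,u) \in E(\Gamma)$, then shuffle $g_j$ leftward past these commuting letters and combine it with $g_i$. Each move strictly decreases word length, so the process terminates at a reduced word representing the same element.

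For uniqueness, let $\mc{N}$ denote the set of equivalence classes of reduced words, where two reduced words are equivalent if they differ by a finite sequence of swaps of adjacent letters that lie in commuting (i.e.\ $\Gamma$-adjacent) vertex groups. For each $v \in V(\Gamma)$ and $g \in G_v \setminus \{1\}$, I would define $\lambda_g : \mc{N} \to \mc{N}$ by the following rule on a representative $g_1 \cdots g_n$: scan left to right for the smallest $i$ such that either $v_i = v$ or $(v_i, v) \notin E(\Gamma)$; in the first case replace $g_i$ by $g \cdot g_i$ (and delete if the product is trivial), in the second case insert $g$ just before position $i$, and if no such $i$ exists, append $g$ at the end. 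After checking that the result is independent of the representative, one needs to verify that these maps satisfy the defining relations of the graph product: $\lambda_g \lambda_h = \lambda_{gh}$ for $g,h \in G_v$, and $\lambda_g \lambda_h = \lambda_h \lambda_g$ whenever $g \in G_v$, $h \in G_w$ with $(v,w) \in E(\Gamma)$.

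The main obstacle will be the commutation identity $\lambda_g \lambda_h = \lambda_h \lambda_g$, requiring a case analysis of where the two scans land relative to each other; the crux is that each scan stops at the first letter lying outside the link of its vertex, and these stopping rules commute when $v$ and $w$ are themselves adjacent in $\Gamma$. Once all relations are verified, the assignment $g \mapsto \lambda_g$ extends to a homomorphism $\lambda : G \to \mathrm{Sym}(\mc{N})$. Evaluating $\lambda_{g_1} \cdots \lambda_{g_n}$ on the class of the empty word returns $[g_1 \cdots g_n]$ for any reduced word $g_1 \cdots g_n$, so distinct classes in $\mc{N}$ correspond to distinct group elements, giving uniqueness up to commuting segments.
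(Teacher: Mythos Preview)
The paper does not supply its own proof of this statement: it is quoted as Theorem~3.9 of \cite{Gr90} and used as background. So there is nothing in the paper to compare your argument against line by line.

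That said, your outline is the classical route (the van der Waerden trick of letting the group act on equivalence classes of reduced words), which is essentially how Green proves it in \cite{Gr90}. The existence half is fine as written. For the uniqueness half, the points that genuinely need care are exactly the two you flag: that $\lambda_g$ is well defined on $\sim$-classes (a single adjacent swap in the representative does not change where the scan stops, up to another swap), and the commutation relation $\lambda_g\lambda_h=\lambda_h\lambda_g$ for $(v,w)\in E(\Gamma)$. For the latter, the clean way to organize the case analysis is to note that when $(v,w)\in E(\Gamma)$, the stopping index for the $v$-scan is unaffected by inserting or modifying a letter in $G_w$ to its left (since $w$ lies in the link of $v$), and vice versa; this symmetry collapses most cases. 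Once those checks are done your final evaluation argument (applying $\lambda_{g_1}\cdots\lambda_{g_n}$ to the empty word) is correct and yields the uniqueness up to commuting segments.
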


Graph products decompose naturally as amalgamated free products. We record this below in the following well-known lemma:

\begin{lem}\label{graph products are amalgamated free products}
Let $\Gamma(G)$ be a graph product and let $v\in V(\Gamma)$. Then, $G$ is cannonically isomorphic to  $ G_1 *_{H} G_2$ where $G_1= \Gamma_{S_v\cup \{v\}}(G)$, $G_2= \Gamma_{V(\Gamma)\setminus \{v\}}(G)$ and $H= \Gamma_{S_v}(G)$.   
\end{lem}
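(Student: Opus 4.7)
The plan is to construct mutually inverse homomorphisms $\phi\colon G_1 *_H G_2 \to G$ and $\psi\colon G \to G_1 *_H G_2$ using the universal properties of graph products and amalgamated free products. First I would observe that, as a consequence of the normal form theorem (Theorem 2.3), for any induced subgraph $\Lambda \subset \Gamma$ the canonical map $\Gamma_\Lambda(G) \to \Gamma(G)$ sending each $G_w$ to its obvious copy in $\Gamma(G)$ is injective; indeed, any nontrivial reduced word in $\Gamma_\Lambda(G)$ remains reduced in $\Gamma(G)$. In particular this yields honest inclusions $H \hookrightarrow G_i \hookrightarrow G$ for $i=1,2$, all compatible with the vertex generating data.

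Next, the universal property of graph products applied to the families of inclusions $\{G_w \hookrightarrow G : w \in S_v \cup \{v\}\}$ and $\{G_w \hookrightarrow G : w \in V(\Gamma) \setminus \{v\}\}$ produces homomorphisms $\phi_1\colon G_1 \to G$ and $\phi_2\colon G_2 \to G$, both of which restrict on $H = \Gamma_{S_v}(G)$ to the canonical embedding of $H$ into $G$. By the universal property of the amalgamated free product these assemble into a single homomorphism $\phi\colon G_1 *_H G_2 \to G$.

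For the reverse direction, I would define $\psi\colon G \to G_1 *_H G_2$ by sending each $G_w$ with $w \in S_v \cup \{v\}$ to its natural copy inside $G_1$, and each $G_w$ with $w \neq v$ and $w \notin S_v$ to its copy inside $G_2$; elements of $G_w$ for $w\in S_v$ are identified along $H$ so the prescription is consistent. The main bookkeeping step, and really the only non-automatic point in the argument, is verifying that every defining commutation relation $[G_u, G_w]=1$ for $(u,w)\in E(\Gamma)$ is respected in $G_1 *_H G_2$. I would split into cases: if $u,w \in S_v \cup \{v\}$, the relation already holds inside $G_1$; if $u,w \in V(\Gamma)\setminus\{v\}$, inside $G_2$; and if one of them is $v$, then by the very definition of the link the other must lie in $S_v$, which reduces to the first case. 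Once $\psi$ is constructed, both $\phi\circ\psi$ and $\psi\circ\phi$ are the identity on every vertex generating subgroup, hence are themselves the identity, completing the isomorphism.
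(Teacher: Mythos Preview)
Your proof is correct. The paper itself records this lemma as ``well-known'' and offers no proof at all, so there is nothing to compare against; your argument via the universal properties of graph products and amalgamated free products is exactly the standard way to establish it.

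One small remark on the bookkeeping: in your case analysis for the commutation relations you send $G_u$ with $u\in S_v$ into $G_1$, but the mixed case $u\in S_v$, $w\notin S_v\cup\{v\}$, $(u,w)\in E(\Gamma)$ is then handled by your second bullet (``$u,w\in V(\Gamma)\setminus\{v\}$, inside $G_2$'') only because the copy of $G_u$ in $G_1$ is identified with its copy in $G_2$ along $H$. You do say this (``elements of $G_w$ for $w\in S_v$ are identified along $H$ so the prescription is consistent''), so the argument is complete; it just might be worth flagging this mixed case explicitly rather than leaving it implicit in the $H$-identification remark.
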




\subsection{Proper proximality}
As stated in the introduction, the family of properly proximal groups is a robust family including the following classes of groups: 
\begin{enumerate}
    \item Non-amenable bi-exact groups.
    \item Non-elementary convergence groups.
    \item Lattices in non-compact semi-simple Lie groups.
    \item Groups admitting a proper 1-cocycle into a non-amenable representation.
    \item Groups acting properly non-elementarily by isometries on proper CAT(0) spaces.
    \item Non-elementary  mapping class groups.
    \item Groups measure equivalent to any of the above.
\end{enumerate}

Items (1) to (4) are results of \cite{BIP18}, items (5) and (6) are  results of \cite{HHL20}, and item (7) is due to \cite{IsPeRu19}.




We say a sequence $\{g_n\}_{n\in\mathbb N}\in G$ goes to infinity \emph{relative} to a countable family of subgroups $\{H_i\}_{i\in I}$, denoted by $g_n\to \infty/\{H_i\}_{i\in I}$, if for any $t_1,\ t_2\in G$ and any $i\in I$, there exists $N\in \mb{N}$ such that for all $n\geq N$, $g_n\notin t_1 H_i t_2$. 

Consider the $C^*$-subalgebra $c_0(G,\mathcal S)\subset \ell^\infty(G)$ consisting of functions $f$ such that for all $g_n\to \infty/\mathcal S$, $f(g_n)\to 0$, where $\mathcal S=\{H_i\}_{i\in I}$. It contains the ideal $c_0(G)$ consisting of functions vanish at infinity. Observe also that $c_0(G,\mathcal S)$ is globally left and right $G$ invariant. Therefore $\ell^\infty(G)/c_0(G,\mathcal S)$ is isomorphic to $C(X_{\mathcal S})$, where $X_{\mathcal S}$ is a closed left and right invariant subset of the Stone-Cech boundary $\Delta G\setminus G$ and  is called the \emph{boundary piece} associated to a collection of subgroups $\mathcal S$.  


\begin{defn}[Theorem 4.3, \cite{BIP18}]\label{relative proper proximality with states}
Given a group $G$ and a family of subgroups $\mathcal S$,  we say $G$ is properly proximal relative to $\mathcal S$ if one of the following equivalent conditions holds:
\begin{enumerate}
    \item There is an action $G\curvearrowright K$ by homeomorphisms on a compact Hausdorff space $K$  such that there is no $G$-invariant probability measure on $K$ and there exists a probability measure
    $\eta\in \mathrm{Prob}(K)$ with  $X_{\mathcal S}=\partial_\eta G:=\{\omega\in \Delta G\setminus G\mid \lim_{g\to\omega} ^{wk^*}g\cdot\eta-(gh)\cdot\eta=0 \mathrm{\ for\ any\ } h\in G\}$.
    \item There are actions by homeomorphisms $G\curvearrowright K_i$, $i=1, \cdots, k$ on compact Hausdorff spaces $K_i$  such that there is no $G$-invariant measure on any $K_i$ and there exists a probability measure $\eta_i\in \mathrm{Prob}(K_i)$ with $X_{\mathcal S}\subset \cup_{i=1}^k \partial_{\eta_i}G$.

    \item There is no left-$G$ invariant state on $\left(\ell^{\infty}(G)/c_0(G,\mathcal S)\right)^{G_r}$ (i.e, the right $G$ invariant subspace of $\ell^{\infty}(G)/c_0(G,\mathcal S)$). 
    \item There is no left-$G$ invariant state on  $\left(\left(\ell^{\infty}(G)/c_0(G,\mathcal S)\right)^{**}\right)^{G_{r}}$. 
\end{enumerate}
\end{defn}
The group $G$ is properly proximal if it is properly proximal relative to the trivial subgroup.

Note that, from the above definition/theorem, if one has a finite collection of subgroups $\{H_i\}_{i=1}^n$ of $G$, such that $G$ is properly proximal relative to $\{H_i\}$ for each $i=1,\cdots, n$ where $\cup_{i=1}^{n}{X_{H_i}}= \Delta G\setminus G$, then $G$ is properly proximal.






\subsection{Groups acting on trees}

We say that an action of $G$ on a set $X$ is non-elementary if it doesn't preserve any set of cardinality at most 2. 

Given a simplicial tree $T$, let $\Delta T$ denote the compactification introduced in \cite{BOW12} (see \cite[$\mathsection\mathsection$ 5.2]{BO08} for details). 
The compactification $\Delta T$ is defined to be $V(T)\sqcup \partial T$ as a set, where $\partial T$ is the Gromov boundary. For each $x\in \Delta T$ and each finite set $F\subset V(T)$, set $U(x,F)=\{y\in \Delta T\mid [x,y]\cap F=\emptyset\}\cup \{x\}$ where $[x,y]$ denotes the unique geodesic path between $x$ and $y$.
Then $\{U(x,F)\mid F\subset V(T)\  \mathrm{finite}\}_{x\in \Delta T}$ forms a basis for the \emph{Bowditch topology} on $\Delta T$.
It is well known that $\Delta T$ equipped with this topology is compact and Hausdorff and any isometric action $G\curvearrowright T$ extends to an action by homeomorphisms on $\Delta T$.

If $G$ is an amalgamated free product or HNN-extension, then we have an action by homeomorphisms of $G\curvearrowright \Delta T$, where $T$ is the Bass-Serre tree associated to $G$ (see \cite{Ser80}).
Also note that if $T$ is countable, then the topology on $\Delta T$ is second countable hence metrizable as the basis may be taken as $\{U(x,F)\mid F\subset V(T)\  \mathrm{finite}\}_{x\in  V(T)}$.

\section{Proofs of main theorems}

\subsection{Proofs of Theorems \ref{main theorem action on trees} and \ref{wreath product}.}
The proof of Theorem \ref{main theorem action on trees}, splits into two steps. The first step is to establish proper proximality relative to the edge stabilizers, and the second step is to upgrade this to proper proximality using the malnormality condition. The way we accomplish step 1 is by obtaining a relative north-south dynamics for the action on the Bowditch compactification of the tree. For step 2, we use the double dual proximal space  $\left(\left(\ell^{\infty}(G)/c_0(G,\mathcal S)\right)^{**}\right)^{G_{r}}$  as a filler space to create several right invariant projections that are indexed by left cosets of the edge stabilizers in $\mathcal S$. Under the malnormality assumption, we see that these are orthogonal projections. From step 1, we see that any $G$-invariant state vanishes on the complement of the sum of these projections. To conclude, we $G$-equivariantly embed $C(\Delta T)$ into  $\left(\left(\ell^{\infty}(G)/c_0(G,\mathcal S)\right)^{**}\right)^{G_{r}}$ along these projections. This contradicts the fact that $G\curvearrowright \Delta T$ is non-elementary. A modification of the above step 2 in combination with a technical lemma adapted from \cite{BO08}, gives us Theorem \ref{wreath product}.

We begin by showing the relative north-south dynamics result described above. We remark that a  result in this flavor in the setting of graphs of convergence groups was recently obtained by R. Tomar in Section 4 of \cite{tomar}.

\begin{prop}\label{action on trees}
Let $G$ be a group acting on a tree $T$ with $|V(T)|=\infty$ by isometries. Set $\mathcal S=\{\mathrm{Stab}(e)\mid e\in E(T)\}$ and suppose $g_n\to\infty \slash\mathcal S$.
Then there exists a subsequence $(g_{n_k})$ and points $a$, $b$ in $\Delta T$ such that for any $x\in \Delta T\setminus\{ b\}$, we have $lim_{k\to\infty} g_{n_k}x=a$. Furthermore, if the action $G\curvearrowright \Delta T$ is non-elementary, then $G$ is properly proximal relative to $\mathcal S$.
\end{prop}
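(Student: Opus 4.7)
The plan proceeds in three stages: identify a sink $a$ and source $b$ in $\Delta T$ for a subsequence of $(g_n)$, prove the north--south contraction $g_{n_k}x \to a$ for all $x \neq b$, and then use this together with non-elementarity to verify condition~(1) of Definition~\ref{relative proper proximality with states} for $K = \Delta T$. Stage~1 begins by fixing a base vertex $v_0 \in V(T)$ and passing to a subsequence via compactness of $\Delta T$ so that $g_n v_0 \to a$ and $g_n^{-1} v_0 \to b$ for some $a,b \in \Delta T$. The first key observation is that both $a,b$ must lie in $\partial T$. Indeed, if $a \in V(T)$ then $a$ is isolated in the Bowditch topology (since $U(v,\{v\}) = \{v\}$), so $g_n v_0 = a$ for large $n$, forcing $g_n$ into a single coset of $\mathrm{Stab}(v_0)$. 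Picking any edge $e_0$ incident to $v_0$, the hypothesis $g_n \to \infty/\mathcal S$ forbids $g_n e_0$ from being eventually constant, so after further extraction the images under $g_n$ of the other endpoint of $e_0$ form infinitely many distinct neighbors of $a$. Distinct neighbors of $a$ sit in pairwise distinct components of $T \setminus \{a\}$, while any boundary point $c \in \partial T$ has basic neighborhood $U(c,\{a\})$ contained in a single such component; hence no subsequence of these neighbors can converge in $\Delta T$, contradicting compactness.

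For Stage~2, I would first record the structural lemma that closed metric balls $B(v_0, D)$ remain closed in the Bowditch topology (for any $c \in \partial T$ the neighborhood $U(c,\{w\})$ with $w$ on the geodesic ray from $v_0$ to $c$ at distance $D+1$ misses $B(v_0,D)$). Since $a \in \partial T$, this yields $d(g_n v_0, F) \to \infty$ for every finite $F \subset V(T)$. For $x \in V(T)$, setting $D = d(v_0, x)$ gives $g_n x \in B(g_n v_0, D)$; for large $n$ this ball avoids $F$ and therefore lies in the $a$-component of $T \setminus F$, so $g_n x \in U(a,F)$. For $x \in \partial T$ with $x \neq b$, consider the median $m_n := m(v_0, x, g_n^{-1} v_0)$: since $g_n^{-1} v_0 \to b \neq x$, eventually $g_n^{-1} v_0$ lies in the $b$-component of $T \setminus \{m_\infty\}$ where $m_\infty := m(v_0, x, b)$, forcing $m_n = m_\infty$. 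A tripod decomposition yields $[a, g_n x] \subseteq [a, g_n v_0] \cup [g_n v_0, g_n m_\infty] \cup g_n[m_\infty, x]$; the first two pieces miss any finite $F$ by the ball argument, and for the third one applies the same ball argument to $g_n^{-1}$ to see that each $g_n^{-1} f$ with $f \in F$ eventually lies in the $b$-component of $T \setminus \{m_\infty\}$, hence outside $[m_\infty, x]$.

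For Stage~3, non-elementarity precludes any $G$-invariant probability on $\Delta T$: an invariant $\mu$, combined with stages~1--2 applied to any escape sequence, must concentrate on the two-point set $\{a,b\}$, which would then be $G$-invariant, contradicting non-elementarity. To verify condition~(1), take $\eta = \sum_n 2^{-n} \delta_{v_n}$ for an enumeration $(v_n)$ of $V(T)$; this probability is supported on $V(T)$ and hence atomless on $\partial T$. For $\omega \in X_{\mathcal S}$ and $g \to \omega$, stages~1--2 give $g\eta \to \delta_a$ weak-$*$ by dominated convergence on the countable sum, where $a$ is determined by $\omega$. For $h \in G$, the translate $gh$ still satisfies $gh \to \infty/\mathcal S$ (since $t_1 \mathrm{Stab}(e) t_2 h^{-1}$ is again a coset of the required form), and $gh\eta \to \delta_a$ by the same argument; thus $g\eta - gh\eta \to 0$ and $\omega \in \partial_\eta G$, giving $X_{\mathcal S} \subseteq \partial_\eta G$. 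The main hurdle throughout is the possible non-local-finiteness of $T$, where metric balls and vertex links can be infinite; the isolation of vertices and the closedness of metric balls in the Bowditch compactification are the structural facts that resolve this and force $a, b \in \partial T$.
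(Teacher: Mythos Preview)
There is a genuine gap in Stage~1 when $T$ is not locally finite. Your claim that every vertex is isolated (via $U(v,\{v\})=\{v\}$) reads the definition with $[v,y]$ as the closed geodesic; but under that reading an infinite star already has $\Delta T$ discrete, contradicting the very compactness you invoke a few lines later. In the intended Bowditch topology a vertex $a$ of infinite valence is \emph{not} isolated: a basic neighbourhood of $a$ consists of $a$ together with all but finitely many of the branches emanating from $a$. In particular, the infinitely many distinct neighbours $g_{n_k}w_0$ of $a$ that you produce simply converge to $a$, and there is no contradiction. The case $a\in V(T)$ therefore cannot be excluded; the paper treats it separately as Case~2, showing directly that $g_{n_k}x\to a$ for \emph{every} $x\in\Delta T$ by using that $g_n\to\infty/\mathcal S$ forces $g_n[o,x]$ to avoid any fixed edge for large $n$. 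Your Stage~2 covers only the boundary case and must be supplemented by this.

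Modulo that missing case, your median/tripod argument in Stage~2 is a clean alternative to the paper's Gromov-product computation for Case~1, and your atomic measure $\eta=\sum_n 2^{-n}\delta_{v_n}$ in Stage~3 is simpler than the paper's appeal to a diffuse measure on the limit set. Two small remarks: you actually establish $X_{\mathcal S}\subseteq \partial_\eta G$, which is condition~(2) of Definition~\ref{relative proper proximality with states} with $k=1$ rather than condition~(1), but these are equivalent so nothing is lost; and your use of $\eta(\{h^{-1}b\})=0$ requires $b\in\partial T$, which does hold in Case~1 and is irrelevant in Case~2 (there is no source), but you should make this explicit once Case~2 is in place.
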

\begin{proof}

For any such a sequence $\{g_n\}$, one may pick a vertex $o\in V(T)$ such that $|\{g_n o\}|$ and $|\{g_n^{-1}o\}|$ are infinite. 
Indeed, for any two distinct vertices $u$ and $v$, the map $n\mapsto \{g_n \cdot [u,v]\}$ must be finite-to-one, since $n\mapsto g_n e$ is finite-to-one for any edge $e$ between $u$ and $v$.
Thus there exists a subsequence $\{g_{n_k}\}_k$ and distinct vertices $u$ and $v$ such that $k\mapsto g_{n_k} u$ and $k\mapsto g_{n_k} v$ are both one-to-one. 
Since we also have $g^{-1}_{n_k}\to\infty/\mathcal S$, the same argument shows that either $|\{g_{n_k}^{-1} u\}_k|=\infty$ or $|\{g_{n_k}^{-1} v\}_k|=\infty$.

Now pick a subsquence $\{g_{n_k}\}$ such that $\lim g_{n_k} o = a$ and $\lim g_{n_k}^{-1} o=b$.

\textbf{Case 1:}  $a\in \Delta T\setminus T$.

Since $d(o, g_{n_k} o)=d(g_{n_k}^{-1} o, o)$, $b$ is also in $\Delta T\setminus T$. The argument in this case proceeds similarly to the argument in \cite{Tu94}. 
Let $U_a=U(a,\{e\})$ and $U_b=U(b,\{f\})$ be basic open sets in $\Delta T$, where $e$ and $f$ are edges with endpoints $e_1,e_2$  and $f_1,f_2$ respectively. For $v\in \Delta T$ denote by $\beta_v$ the geodesic from $o$ to $v$. Denote by $\langle x,y \rangle_{o}$ the Gromov product, given by the distance between $o$ and the center of the unique geoedesic tripod formed by $x,y$ and $o$. 
For all $c\notin U_b$, i.e, $[b,c]\cap [f_1,f_2]\neq \emptyset$, we have $$\liminf_{m,n\to \infty}\langle \beta_b(m), \beta_c(n) \rangle_o\leq \max\{d(o,f_1),d(o,f_2)\}=:d_1$$
Define $V(b;2d_1)=\{x\in \Delta T \ | \  \liminf_{m,n\to \infty} \langle \beta_{x}(m), \beta_{b}(n)\rangle_o\geq 2d_1 \}$. 
From the above, we have $U_b^c\subset V(b,2d_1)^c$. Now let $c\in V(b,2d_1)$. We have: $$\liminf_{m,n\to \infty} \langle \beta_a(m),g_{n_k}\beta_{c}(n) \rangle_o\geq \min\{ \liminf_{m\to \infty}\langle \beta_a(m),g_{n_k}o \rangle_o, \liminf_{n\to \infty}\langle g_{n_k}o, g_{n_k}\beta_c(n) \rangle_o \}.$$ 
Note that $\lim_{m\to\infty} \langle \beta_a(m),g_{n_k}0 \rangle_o\to\infty$ as $k\to \infty$; also, $$\langle g_{n_k}o,g_{n_k}\beta_c(n) \rangle_o=\langle o,\beta_c(n) \rangle_{g_{n_k}^{-1}o}\to\infty $$ as $k\to\infty$ since $c\notin V(b,2d_1)$. Thus $\forall R>0$, there exists $K_0(R)$ such that for all $k\geq K_0(R)$, we have $g_{n_k}(V(b,2d_1))\subset V(a,R)$. 

Now, suppose for all $k>0$, there exists $k_0>k$ and $c\in U_b^c$ such that $g_{n_k}(c)\notin U(a,\{e\})$, that is, $$\liminf_{m,n\to \infty} \langle \beta_a(n), g_{n_k}\beta_c(m) \rangle_o\leq \min\{d(o,e_1),d(o,e_2)\}=:d_2.$$ 
Take $R=2d_2$. Then $g_{n_k}(c)\notin U(a,\{f\})$ but $g_{n_k}(c)\in V(a,2d_2)$ for all $k>K(R)$, which is a contradiction. Hence, we have the conclusion in the case that $a\in \Delta T\setminus T$.

\textbf{Case 2:} $a\in T$. Let $x\in \Delta T$. We claim that $\lim_{k\to \infty} g_{n_k}(x)=a$. 

First suppose $x\in T$. For any open set $U(a,\{e\})$, we have the following: $$ |\{ n\in \mb{N}: g_n[o,x]\ni e\}|<\infty. $$ Indeed, if the above set was infinite, then there would be an infinite set $I\subset \mb{N}$ with $g_{n_{i}}^{-1}e\in [o,x]$ for $i\in I$. Since $[o,x]$ is a fixed finite length path, this contradicts the fact that $g_{n_i}$ escapes all edge stabilizers. Now, since $g_n(o)\to a$, we have that $\exists N>0$ such that for all $n\geq N$, $g_no\in U(a,\{e\})$. Hence for $n$ sufficiently large, we have that $\{e\}$ is not in the segment $[g_no,g_nx]$ and $\{e\}$ is not in the segment $[g_no,a]$, hence $\{e\}$ is not in the segment $[g_nx,a]$ which gives us what is required. 

Suppose $x\in \del T$. We have that $\{g_no\}$ is bounded, and hence let $d_0=\sup_{n} d(a,g_no)$. Given an open set $U(a,\{e\})$, let $d_1=d(a,e_1)+1$, where $d$ is the combinatorial metric on $T$. Let $N=d_0+d_1+1$. For any $n\geq N$, we apply the previous argument to the point $\beta_x (n)$ to obtain that $e\notin g_m[\alpha(0), \beta_x (n)]$ for large $m$. Moreover, due to the bound on $d(a,e)$, we have that $e\notin[\beta_x(n),x]$.

To show relative proper proximality, consider a sequence $\{g_n\}$ with $g_n\to \infty /\mathcal S$. As mentioned in \cite[Section 2]{Bow99}, if $G\curvearrowright \Delta T$ is non-elementary then the limit set is perfect, i.e., there exists a compact Hausdorff perfect set inside $\Delta T$, which implies the existence of some diffuse measure. Fix a diffuse probability measure $\eta$ on $\Delta T$, and let $a, b\in \Delta T$ and $\{g_{n_k}\}$ be as in the above arguments. By \cite[Lemma 8.3]{FLM18}, for any $h\in G$, $\lim_{k\to\infty} g_{n_k} h\eta=\delta_{a}$ and thus $X_\mathcal S=\partial_\eta G$. Moreover, there is no $G$-invariant probability measure on $\Delta T$.  Indeed, if there were a $G$-invariant probability measure $\mu$, then we see from the north-south dynamics that $\mu(\{a,b\})=1$ where $a$ and $b$ are some north and south poles. This contradicts the fact that the action is non-elementary.
\end{proof}


Applying the above proposition to the Bass-Serre tree associated with an amalgamated free product, we obtain:

\begin{cor}\label{properly proximal relative to amalgams}
Given $G_1, G_2$ and $H$ with $H\leq G_i$, $i=1,2$. Then $G=G_1\ast_H G_2$ is properly proximal relative to $H$ if $[G_i:H]\geq 3$ and $[G_j:H]\geq 2$, for $\{i,j\}=\{1,2\}$.
\end{cor}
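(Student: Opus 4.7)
The plan is to apply Proposition \ref{action on trees} to the Bass--Serre tree $T$ associated to the decomposition $G = G_1 \ast_H G_2$ and then translate the conclusion from the family of edge stabilizers down to the single subgroup $H$. Recall that vertices of $T$ correspond to the cosets $G/G_1 \sqcup G/G_2$ and edges to the cosets $G/H$, with the stabilizer of the edge corresponding to $gH$ equal to $gHg^{-1}$. Thus the family $\mathcal{S} := \{\mathrm{Stab}(e) : e \in E(T)\}$ consists precisely of the $G$-conjugates of $H$.

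The first step is to verify the hypotheses of Proposition \ref{action on trees}, namely that $T$ is countably infinite and that $G \curvearrowright \Delta T$ is non-elementary. Countability is immediate from countability of $G$. For non-elementarity, the Bass--Serre tree $T$ is bi-regular with valences $[G_1:H]$ and $[G_2:H]$ at the two vertex types; since one index is at least $3$ and the other at least $2$, $T$ has no leaves and is not a single line. A standard ping-pong argument on $T$ then produces two hyperbolic elements with disjoint axes, yielding a free subgroup acting with north--south dynamics on $\partial T$, so $G \curvearrowright \Delta T$ preserves no subset of cardinality at most two.

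Proposition \ref{action on trees} now gives that $G$ is properly proximal relative to $\mathcal{S}$. The second step is to upgrade this to proper proximality relative to the single subgroup $H$. For this it suffices to observe that $c_0(G, \mathcal{S}) = c_0(G, \{H\})$, which will force $X_\mathcal{S} = X_{\{H\}}$. Indeed, for any $g \in G$ and $t_1, t_2 \in G$ one has
\[
t_1 (gHg^{-1}) t_2 \;=\; (t_1 g)\, H\, (g^{-1} t_2),
\]
so a sequence avoids every double coset of an element of $\mathcal{S}$ if and only if it avoids every double coset of $H$. Hence $g_n \to \infty/\mathcal{S}$ if and only if $g_n \to \infty/\{H\}$, the two $C^*$-subalgebras of $\ell^\infty(G)$ coincide, and proper proximality relative to $\mathcal{S}$ is the same as proper proximality relative to $H$.

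The only genuinely nontrivial point is the non-elementarity check; the translation from a family of conjugate subgroups to one representative is formal. The index conditions $[G_1:H] \geq 3$, $[G_2:H] \geq 2$ (or vice versa) are precisely what is needed to rule out the degenerate cases in which $T$ is a point, a star, or a line, and to produce the hyperbolic elements required for ping-pong.
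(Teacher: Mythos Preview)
Your proof is correct and follows the same route as the paper's primary proof: apply Proposition \ref{action on trees} to the Bass--Serre tree, check non-elementarity from the index hypotheses, and note that edge stabilizers are exactly the conjugates of $H$ so that proper proximality relative to $\mathcal S$ coincides with proper proximality relative to $H$. The paper also records a second, Bass--Serre--free argument using normal forms and explicit projections in $(\ell^\infty(G)/c_0(G,H))^{G_r}$, which you may find instructive as it makes the gap between non-inner-amenability and proper proximality visible.
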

\begin{proof}
 This follows by considering the standard action on the compactification of the Bass-Serre tree associated to the amalgamated free product. This action is non-elementary on $\Delta T$ because the boundary is infinite and orbits of the action on the boundary are infinite. Moreover, the edge stabilizers are precisely conjugates of $H$.  
\end{proof}

  The following is a direct proof of the above corollary without involving Bass-Serre theory. We would like to point out that the technical difference between non-inner amenability and proper proximality is clearly seen in this proof. 

 \begin{proof}[Alternative proof of Corollary \ref{properly proximal relative to amalgams}]
  Fix a choice of transversals $T=\{e\}\cup\{t_i\}_{i\in I}$ for cosets $\{Hx: x\in G_1\}$ and $S= \{e\}\cup\{s_j\}_{j\in J}$ for cosets $\{Hx: x\in G_2\}$.
  Denote by $p_1$ the characteristic function on the set of elements $g$ whose normal form begins with $ht_i$, where $h\in H$, $i\in I$; similarly, let $p_2$ be characteristic function on the set of elements $g$ whose normal form begins with $hs_j$, where $h\in H$, $j\in J$.

  \textbf{Claim:} $p_i\in (\ell^\infty(G)/c_0(G, H))^{G_r}$ for $i=1,2$.

  Suppose there exists some $g\in G$ and a sequence $g_n\to\infty/H$ such that $\lim_{n\to\infty} p_1(g_n)-p_1(g_ng)\neq 0$, then we may extract a subsequence, still denoted by $(g_n)$, with $\lim_{n\to\infty} p_1(g_n)-p_1(g_ng)= 1$.
  Let $g_n=h_n t_{1,n}s_{1,n}\dots t_{k_n,n}s_{k_n,n}$ be the normal form of each $g_n$, where $h_n\in H$, $t_{i,n}\in T\setminus\{e\}$ and $S_{i,n}\in S\setminus\{e\}$, with $t_{1,n}$ and $s_{k_n,n}$ possibly being $e$.
  Note that since $g_n\to\infty/H$, $\{h_n^{-1} g_n\mid n\in\mathbb N\}$ is a infinite set.
  Set $g=ht_1 s_1 \dots t_m s_m$ to be its normal form.
  As $\lim_{n\to\infty} p_1(g_n)-p_1(g_ng)= 1$, for large enough $n$ we have $t_{1,n}\in T\setminus\{e\}$ while $g_ng\notin \supp(p_1)$.
  Note that 
  \begin{align*}
      g_ng&=h_n t_{1,n}s_{1,n}\dots t_{k_n,n}s_{k_n,n}ht_1 s_1 \dots t_m s_m\\
      &=h_n h't_{1,n}'s_{1,n}'\dots t_{k_n,n}'s_{k_n,n}'t_1 s_1 \dots t_m s_m,
  \end{align*}
 
  where $t_{i,n}'\in T\setminus\{e\}$, $s_{i,n}'\in S$ are obtained as we move $h$ to the front.
  By our assumption, $g_ng\notin \supp(p_1)$, i.e., $t_{1,n}'$ would disappear after possible cancelations for all large enough $n$; however $g$ is given while $\{h_n^{-1} g_n\mid n\in\mathbb N\}$ is a infinite set, we arrive at a contradiction. This proves the claim.

  Now take $g_1, g_2\in G_i\setminus H$ and $g_3\in G_j\setminus H$, which is allowed by the assumption that $[G_i:H]\geq 3$ and $[G_j:H]\geq 2$. Notice that $g_1p_1+g_2p_1\leq p_2$ while $g_3p_2\leq p_1$ and $p_1+p_2=1$.
  If there exists a left $G$-invariant state $\varphi$ on $(\ell^\infty(G)/c_0(G, H))^{G_r}$, then $2\varphi(p_1)\leq \varphi (p_2)\leq \varphi (p_1)$, i.e., $\varphi(p_1)=\varphi(p_2)=0$ while $1=\varphi(p_1+p_2)$, which is a contradiction.
 \end{proof}

\begin{proof}[Proof of Theorem \ref{main theorem action on trees}]

Suppose $G$ is not properly proximal. Then there exists a left $G$-invariant state $\varphi$ on the space $\left((\ell^{\infty}(G)/c_0(G))^{**}\right)^{G_r}$.
Let $\{H_i\}_{i\in I}$ be a family of subgroups of $G$ such that $\bigsqcup_{i\in I}\{g H_i g^{-1}\mid g\in G\}=\{{\rm stab}(e)\mid e\in E(t)\}$.
Set $p_i=\bigvee_{t,s\in G }1_{t H_i s}$ to be a projection in $\left((\ell^{\infty}(G)/c_0(G))^{**}\right)^{G_r}$. 
From Proposition \ref{action on trees}, we obtain that $\varphi(p)=1$, where $p=\bigvee_{i\in I} p_i$. Indeed, we have that $G$ is properly proximal relative to $\mathcal S=\{{\rm stab}(e)\mid e\in E(t)\}$ and so there is no left invariant state on $\left(\left(\ell^{\infty}(G)/c_0(G,\mathcal S)\right)^{**}\right)^{G_{r}}$. But observe that   $\left(\left(\ell^{\infty}(G)/c_0(G,\mathcal S)\right)^{**}\right)^{G_{r}}= (1-p) \left(\ell^{\infty}(G)/c_0(G)\right)^{G_r}$, and hence $\varphi(1-p)=0$.
For $t\in G$ and $i\in I$, put $p_{t,i}= \bigvee_{s\in G} 1_{tH_is} \in \left((\ell^{\infty}(G)/c_0(G))^{**}\right)^{G_r}$ and we have $\bigvee_{t\in G} p_{t,i}=p_i$. 
Let $\{F_n\}$ be a sequence of increasing finite subsets such that $\cup_n F_n=G$.
For each $t\in G$, consider projections $p_{t,i,n}= 1_{tH_i F_n}\in (\ell^{\infty}(G)/c_0(G))^{**}$ and then $\lim_{n\to \infty} p_{t,i,n}=p_{t,i}$ in SOT. 

Note that $|tH_is\cap H_j|<\infty$ for any $i, j\in I$ (not necessarily distinct), $s\in G$ and $t\in G$ such that $tH_it^{-1}\neq H_j$; indeed, suppose there exists some $ths\in tH_is\cap H_j$. Then for any $th's\in  tH_is\cap H_j$, we have $ths(th's)^{-1}=t hh'^{-1}t^{-1}\in tH_i t^{-1}\cap H_j$, which is finite as for any pair of distinct edges $e,f\in E(T)$, $\mathrm{Stab}(e)\cap\mathrm{Stab}(f)$ is finite. 
Therefore we have $p_{t,i,n}p_{s,j,n}=0$ for all $t, s\in G$ such that $tH_i t^{-1}\neq sH_j s^{-1}$. Since the $(p_{t,i,n})$ and $(p_{s,j,n})$ are bounded,  $\lim_{n\to\infty}p_{t,i,n}p_{s,j,n}=p_{t,i}p_{s,j}$, i.e., $p_{t,i}p_{s,j}=0$.

Consider a $G$-equivariant unital embedding $\psi:\ell^\infty(V(T))\ni \delta_v\mapsto \sum_{e\in E_v}\delta_e/2\in\ell^\infty(E(T))$, where $E_v=\{e\in E(T)\mid v\in e\}$.
Given $e\in E(T)$, there exists a unique $i\in I$ and some $t\in G$ such that ${\rm stab}(e)=t H_i t^{-1}$. Thus $p_{t,i}$ is a unique projection corresponding to $e$ and we denote it by $p(e)$.
Therefore we obtain an embedding $\iota: \ell^\infty(E(T))\to \left((\ell^{\infty}(G)/c_0(G))^{**}\right)^{G_r}$ given by $\iota(\delta_{e})=p(e)$ and it is easy to check this embedding is unital and $G$-equivariant. 
Finally, as $C(\Delta T)\subset \ell^\infty(V(T))$, we have a $G$-invariant state $\varphi\circ\iota\circ\psi$ on $C(\Delta T)$, which contradicts the assumption that $G\curvearrowright \Delta T$ is non-elementary.
\end{proof}

In the above proof, we actually showed the following useful tool:

\begin{lem}\label{upgrading tool}

 Let $H$ be a subgroup of $G$ such that $G$ is properly proximal relative to $H$. Suppose $H$ is almost malnormal and not co-amenable in $G$, then $G$ is properly proximal. 
 
\end{lem}

For the convenience of the reader, we extract the argument here: if $G$ is not properly proximal, there exists a left $G$-invariant state $\varphi$ on the space $\left((\ell^{\infty}(G)/c_0(G))^{**}\right)^{G_r}$. Setting $p=\bigvee_{t,s\in G }1_{t H s}$ to be a projection in $\left((\ell^{\infty}(G)/c_0(G))^{**}\right)^{G_r}$, we see that   $\left(\left(\ell^{\infty}(G)/c_0(G,H)\right)^{**}\right)^{G_{r}}= (1-p) \left(\ell^{\infty}(G)/c_0(G)\right)^{G_r}$, and hence $\varphi(1-p)=0$ from the relative proper proximality hypothesis. For $t\in G$, setting $p_{t}= \bigvee_{s\in G} 1_{tHs} \in \left((\ell^{\infty}(G)/c_0(G))^{**}\right)^{G_r}$, we have $\bigvee_{t\in G} p_{t}=p$. As in the above proof, from the almost malnormality of $H$, we see that $p_g$ is orthogonal to $p_e$ for any $g\notin H$.  Therefore we obtain an embedding $\iota: \ell^\infty(G/H)\to p\left((\ell^{\infty}(G)/c_0(G))^{**}\right)^{G_r}$ given by $\iota(\delta_{g})=p_g$ and it is easy to check this embedding is unital and $G$-equivariant. Composing with $\varphi$, we obtain the co-amenability of $H<G$, which contradicts the assumption. 

\begin{lem}\label{Ozawa trick}
If $G$ is a non-trivial group and $H$ is non-amenable, then $K=G\wr H$ is properly proximal relative to $H$. 
\end{lem}
\begin{proof}
 We use a trick due to Ozawa which he uses to study bi-exactness of wreath products (see Section 15.3 in \cite{BO08}). Fix a proper length function $|\cdot|_H$ on $H$, and a proper length function $|\cdot|_G$. For $yt\in G\wr H$, where $y\in \oplus_{H}G$ and $t\in H$, define $\zeta: K\to \ell^1(H)$, given by $$ \zeta(yt)(p)= \begin{cases} 
      \min\{|p|_H,|t^{-1}p|_{H}\}+ |y(p)|_{G} & \text{if $p\in $ supp($y$)} \\
      0 & \text{if $p\notin $ supp($y$).}
   \end{cases} $$ 
From Lemma 15.3.7 and 15.3.8 in \cite{BO08}, we see that $\zeta$ satisfies  $$ \lim_{x\to \infty/H}\frac{\norm{\zeta(sxt)-s\zeta(x)}}{\norm{\zeta(x)}}=0  $$ for all $s,t\in K$.
Hence by the first computation in the proof of Lemma 15.2.6 in \cite{BO08}, we see that there exists a $K$-equivariant u.c.p. map from $\ell^\infty(K/\oplus_{H}G)$ to the relative proximal space $\left(\ell^{\infty}(K)/c_0(K,H)\right)^{{K}_{r}}$. Hence, if $K$ is not properly proximal relative to $H$, by composing with this u.c.p. map, we obtain an $H$ invariant state on $\ell^\infty(K/\oplus_{H}G)\cong \ell^\infty(H)$, which contradicts the non-amenability of $H$.
\end{proof}

\begin{proof}[Proof of Theorem \ref{wreath product}]

Observe that since $G$ is non-trivial,  $H$ is almost malnormal inside $G\wr H$. Moreover, if $H$ is non-amenable, from Lemma \ref{Ozawa trick}, and Lemma \ref{upgrading tool}, we have that $G\wr H$ is properly proximal, since $H$ is never co-amenable inside $G\wr H$. Indeed, we may first identify $(G\wr H)/H$ with $\oplus_G H$ as $H$-sets. Then consider the following map $\rho: \ell^\infty (H)\to \ell^\infty(\oplus_G H)$ given by 
$$\rho(f)(\xi)= \frac{\sum_{t\in \text{supp}(\xi)}f(t)}{|\text{supp}(\xi)|}, $$ for any $f\in \ell^\infty(H)$ and $\xi\in\oplus _GH$ and clearly $\rho$ is unital and $H$ equivariant.  

Conversely, if $H$ is amenable and infinite, then from (1) of \cite{DTW19} Theorem 3.9, we see that $G\wr H$ is inner amenable and hence not properly proximal. 
\end{proof}

\subsection{Proof of Theorem \ref{main theorem introduction 3}}
The main idea in proving Theorem \ref{main theorem introduction 3} is to consider various amalgamated product decompositions in a graph product, and obtain relative proper proximality relative to each of these amalgams. Then, we show that the Stone-Cech boundary of the graph product is filled by each of these boundary pieces coming from the amalgams, thereby showing proper proximality. The other key step  is to show that if a product of groups is properly proximal, then each of the groups has to be properly proximal. This is obtained by establishing a natural isomorphism of the proximal spaces, at the level of the double dual. The classification result is then obtained by a careful analysis of some cases, given by the radius of the graphs. 

Given two subgraphs $\Gamma_1, \Gamma_2$ of a graph $\Gamma$, denote by $\Gamma_1\cap \Gamma_2$ the subgraph of $\Gamma$ generated by the vertex set $V(\Gamma_1)\cap V(\Gamma_2)$.
As a convention, $\Gamma(G)$ is set to be the trivial group if $V(\Gamma)=\emptyset$.

\begin{lem}\label{intersection}
Let $\Gamma(G)$ be a graph product group with graph $\Gamma$ and generating groups $\{G_v\mid v\in V(\Gamma)\}$.
For any subgraphs $\Gamma_1$, $\Gamma_2$ of $\Gamma$ and $g, h\in \Gamma(G)$, we have $\Gamma_1(G)\cap g \Gamma_2(G) h\subset \cup_{i=1}^n c_i (\Gamma_1\cap \Gamma_2)(G) d_i$, for some finite subset  $F= \{c_i, d_i\mid 1\leq i\leq n\}\subset \Gamma(G)$.
\end{lem}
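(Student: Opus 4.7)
The plan is to argue by induction on the total reduced-word length $|g|+|h|$, where $|\cdot|$ denotes the length of any reduced expression in the sense of Theorem~\ref{normal form theorem for graph products}. The main supporting observation is that membership in a special subgroup $\Gamma_S(G)$ is detected by supports: for $x\in\Gamma(G)$, one has $x\in\Gamma_S(G)$ if and only if every letter appearing in some (equivalently every) reduced expression for $x$ lies in $G_v$ for some $v\in V(\Gamma_S)$. This immediately settles the base case $g=h=e$, since any $x\in\Gamma_1(G)\cap\Gamma_2(G)$ has support contained in $V(\Gamma_1)\cap V(\Gamma_2)=V(\Gamma_1\cap\Gamma_2)$, hence $x\in(\Gamma_1\cap\Gamma_2)(G)$; one takes the finite set $F=\{e\}$.

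For the inductive step with $|g|+|h|\geq 1$, fix a reduced expression $g=g_1\cdots g_p$ (the case $|g|=0$, $|h|\geq 1$ is treated symmetrically on $h$), and let $v_1$ be the vertex with $g_1\in G_{v_1}$. If $v_1\in V(\Gamma_1)$, then $g_1\in\Gamma_1(G)$ and factoring on the left gives
\[\Gamma_1(G)\cap g\Gamma_2(G)h=g_1\bigl(\Gamma_1(G)\cap(g_1^{-1}g)\Gamma_2(G)h\bigr),\]
which shortens $|g|$ by one and concludes by induction. If instead $v_1\in V(\Gamma_2)$ and $v_1$ is adjacent to every vertex of $\supp(g_1^{-1}g)$, then $g_1$ commutes past $g_2\cdots g_p$ and, lying in $\Gamma_2(G)$, is absorbed: $g\Gamma_2(G)=(g_1^{-1}g)\Gamma_2(G)$, again shortening $g$.

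In the remaining case, any $x=gyh$ in the intersection must have $g_1$ cancelled in the reduced form of $gyh$. The cancelling partner cannot lie within $g$ (as $g$ is reduced), nor within $y\in\Gamma_2(G)$: if $v_1\notin V(\Gamma_2)$, no letter of $y$ lies in $G_{v_1}$, while if $v_1\in V(\Gamma_2)$ the failure of the second case means some $g_i$ in $g_2\cdots g_p$ is not adjacent to $v_1$ and blocks the required commutation. The partner is therefore some $h_j=g_1^{-1}\in G_{v_1}$ at one of finitely many positions $j$ (those with $h_j\in G_{v_1}$ and $h_1,\ldots,h_{j-1}$ all commuting with $g_1$), and for the cancellation to succeed one must have $\supp(y)\subset V(\Gamma_2)\cap S_{v_1}$, so $y\in\tilde\Gamma_2(G)$ for $\tilde\Gamma_2=\Gamma_{V(\Gamma_2)\cap S_{v_1}}$. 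After cancelling $g_1$ with $h_j$ one obtains $gyh=g'y h''$ with $g'=g_1^{-1}g$ and $h''$ the word $h$ with its $j$-th letter removed, so that $|g'|+|h''|<|g|+|h|$. The inductive hypothesis applied to $(g',h'')$ with the subgraphs $(\Gamma_1,\tilde\Gamma_2)$ furnishes a finite family of double cosets of $(\Gamma_1\cap\tilde\Gamma_2)(G)\subset(\Gamma_1\cap\Gamma_2)(G)$ containing all such $x$, and summing over the finitely many valid $j$ yields the desired finite union.

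The main technical obstacle is the bookkeeping in this last case: one must verify that no additional cancellation routes are opened by commutation, that the finite enumeration of partner positions $j$ is exhaustive, and that the reduced form is preserved through the reduction so that the inductive hypothesis is legitimately applicable. These verifications rely on the uniqueness up to commuting segments in Theorem~\ref{normal form theorem for graph products}, which pins down the support and prevents hidden cancellations.
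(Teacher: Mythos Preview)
Your inductive scheme on $|g|+|h|$ is a natural idea and the first two cases are fine, but Case~3 contains a genuine gap: the claim that ``for the cancellation to succeed one must have $\supp(y)\subset V(\Gamma_2)\cap S_{v_1}$'' is false, and the enumeration of admissible partners $h_j$ is too restrictive. The problem is that you are treating the cancellation of $g_1$ as if it must commute directly past every letter between it and its partner, but intermediate cancellations among $g_2,\dots,g_p$, the letters of $y$, and the letters of $h$ can remove blocking letters and open a path for $g_1$.

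Here is a concrete counterexample. Take $\Gamma$ to be the path $a\,\text{--}\,b\,\text{--}\,c$, let $\Gamma_1$ be the subgraph on $\{b,c\}$ and $\Gamma_2=\Gamma$, and pick nontrivial $g_a\in G_a$, $g_c\in G_c$. Set $g=g_a g_c$ (reduced, since $a$ and $c$ are not adjacent), $y=g_c^{-1}\in\Gamma_2(G)$, and $h=g_a^{-1}$. Then $v_1=a\notin V(\Gamma_1)$, and your Case~2 does not apply because $g_c$ does not commute with $g_a$. Nevertheless
\[
gyh \;=\; g_a\,g_c\,g_c^{-1}\,g_a^{-1}\;=\;e\;\in\;\Gamma_1(G),
\]
while $\supp(y)=\{c\}\not\subset S_{v_1}=\{b\}$. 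The same example shows the partner criterion can fail: if instead one takes $y=e$ and $h=g_c^{-1}g_a^{-1}$, then $g_1=g_a$ ultimately cancels with $h_2=g_a^{-1}$ even though $h_1=g_c^{-1}$ does not commute with $g_a$; the intermediate cancellation $g_c\cdot g_c^{-1}$ clears the obstruction. Thus neither the constraint on $\supp(y)$ nor the commutation condition on $h_1,\dots,h_{j-1}$ is justified, and your induction does not close.

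The paper sidesteps this difficulty by fixing the finite set $F$ at the outset---namely all ordered subwords $c_i$ of $g$ and $d_i$ of $h$---and then inducting not on $|g|+|h|$ but on the number of letters of $t\in\Gamma_2(G)$ lying outside $V(\Gamma_1\cap\Gamma_2)$. For a vertex $v_0\notin V(\Gamma_1\cap\Gamma_2)$ appearing in $t$, one argues (using the projection $\pi_{v_0}:\Gamma(G)\to G_{v_0}$ and the fact that $c_i t d_i\in\Gamma_1(G)$) that deleting all $G_{v_0}$-letters from $c_i t d_i$ does not change the group element; this simultaneously deletes letters from $c_i$, $t$, and $d_i$, producing new subwords $c_{i'}$, $d_{i'}$ and a shorter $t$. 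Because the candidate set $F$ already contains every subword, no bookkeeping of ``where the cancellation partner lives'' is needed.
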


\begin{proof}

Given $g, h\in \Gamma(G)$ with $g= g_1\cdots g_k$ and $h= h_1\cdots h_m$ as their reduced words respectively, let $F=\{c_i,d_i\}$ be the finite set consisting of words of the form $c_i= \prod_{j}g_{i_j}$ and $d_i= \prod_{j}h_{i_k}$, where $(i_j)$ and $(i_k)$ are increasing.
We claim that $F$ is the required set. It suffices to show every $i$, we have $\Gamma_1(G)\cap c_i\Gamma_2(G)d_i\in \cup_{j=1}^n c_j (\Gamma_1\cap \Gamma_2)(G) d_j$. 
We show this by induction on  $n(t)=|\{t_j\mid 1\leq j\leq l,\ t_j\in G_v \mathrm{\ for\ some\ } v \notin \Gamma_1\cap \Gamma_2 \}|$, where $t=t_1\cdots t_l$ is a reduced word in $\Gamma_2(G)$ such that $gth\in \Gamma_1(G)$.
The claim is true for $n=0$, as $c_itd_i\in c_i(\Gamma_1\cap\Gamma_2)(G)d_i$. 

Suppose the claim is true for $n\leq K$ and let $n=K+1$. As $n=|\bigcup_{v\notin V(\Gamma_1\cap \Gamma_2)}\{t_j\mid 1\leq j\leq l, t_j\in G_{v}\}\big|$, we may assume $|\{t_j\mid 1\leq j\leq l, t_j\in G_{v_0}\}|\geq 1$ for some $v_0\notin V(\Gamma_1\cap \Gamma_2)$. 
For a word $w$ define $\hat{v}_0(w)$ to be the ordered set of letters in $w$ (ordered by the left to right order in the word $w$) that do not belong to $G_{v_0}$. For an ordered set of letters $u$ denote by $u_{\Gamma(G)}$ the product of letters of $u$ in $\Gamma(G)$. Denote by $w^*$ the reduced word of  $c_itd_i$ and note $\hat{v}_0(w^*)_{\Gamma(G)}=w^*$ as $c_itd_i\in \Gamma_1(G)$.



 \textbf{Claim:} $\hat{v}_0(c_itd_i)_{\Gamma(G)}= {c_itd_i}_{\Gamma(G)} = w^* $. 
 
 Denote the word $c_itd_i$ by $p_1p_2\cdots p_n$ and suppose $p_{i_1},\dots, p_{i_d}$ are letters that belong to $G_{v_0}$. 
 Define words $w_1=p_1\cdots p_{i_1-1} $, $w_j= p_{i_{j-1}+1}\hdots p_{i_{j}-1}$ for $1<j\leq d$ and $w_{d+1}= p_{i_d+1}\hdots p_n$. 
 Upon individually reducing these words, we obtain the words $w_i^*$. 
 Now, we have ${c_itd_i}_{\Gamma(G)}= {w^*_1p_{i_1}w^*_2p_{i_2}\hdots p_{i_d}w^*_{d+1}}_{\Gamma(G)}$. 
We may assume that ${w^*_j}_{\Gamma(G)}$ does not commute with $G_{v_0}$ for some $2\leq j \leq d$. 
Then, ${w^*_j}_{\Gamma(G)}$ neither commute with $a= {w^*_1p_{i_1}\hdots p_{i_{j-1}}}_{\Gamma(G)}$ nor with $b={p_{i_j}w^*_{j+1}\hdots w^*_{d+1}}_{\Gamma(G)}$. 
It follows that $p_jp_{j+1}\hdots p_d=1$ and $p_1\hdots p_{j-1}=1$. 
Suppose otherwise and then ${c_itd_i}_{\Gamma(G)}={aw^*_jb}_{\Gamma(G)}$ while $a,b\notin \ker(\pi_{v_0})$, where $\pi_{v_0}$ is the canonical surjection from $\Gamma(G)\to G_{v_0}$ . Since $w^*_j$ doesn't commute with $G_{v_0}$ we see ${c_itd_i}_{\Gamma(G)}\notin \Gamma_1(G)$ which is a contradiction. We proceed recursively to conclude the claim.  
 
 Now it suffices to show $\hat{v}_0(c_itd_i) \in  \cup_{j=1}^n c_j (\Gamma_1\cap \Gamma_2)(G) d_j $, and this follows from the inductive hypothesis as $\hat{v_0}(c_itd_i)= c_{i'} \hat{v_0}(t) d_{i'}$ for some other $i'$, and $n(\hat{v_0}(t))<n(t)=K+1 $.
\end{proof}

\begin{lem}\label{boundary pieces associated to stars union to the whole piece}
Let $\Gamma$ be a graph. Then, $$\bigcup_{v\in V(\Gamma)}X_{\Gamma_{S_v}(G)}= \Delta \Gamma(G).$$

\end{lem}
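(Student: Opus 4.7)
The plan is to argue by contradiction via an ultrafilter argument, exploiting the finiteness of $V(\Gamma)$ together with the intersection formula in Lemma \ref{intersection}.

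First, I would record the following characterization of the boundary piece: for any subgroup $H\leq \Gamma(G)$, a point $\omega\in \Delta\Gamma(G)$ (viewed as a non-principal ultrafilter) belongs to $X_H$ if and only if no double coset $t_1 H t_2$ lies in $\omega$. The easy direction is immediate from $1_{t_1 H t_2}\in c_0(\Gamma(G), H)$. The converse is a standard enumeration argument: if some $f\in c_0(\Gamma(G), H)$ had $|f(\omega)|\geq\epsilon$, then $A=\{g: |f(g)|\geq\epsilon\}\in\omega$; assuming no double coset is in $\omega$, one enumerates all pairs $(t_{1,i}, t_{2,i})$ and inductively picks $g_n\in A\setminus \bigcup_{i\leq n} t_{1,i}Ht_{2,i}$, producing a sequence in $A$ going to $\infty/\{H\}$, contradicting $f\in c_0(\Gamma(G),H)$.

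With this characterization, I would suppose toward contradiction that some $\omega\in \Delta\Gamma(G)$ lies in no $X_{\Gamma_{S_v}(G)}$. Then for each $v\in V(\Gamma)$ the ultrafilter contains a double coset $t_{1,v}\Gamma_{S_v}(G)t_{2,v}$, and since $V(\Gamma)$ is finite, the finite intersection
\[
\bigcap_{v\in V(\Gamma)} t_{1,v}\Gamma_{S_v}(G) t_{2,v}
\]
also belongs to $\omega$.

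Now iterate Lemma \ref{intersection}: at each step, intersecting two double cosets yields a finite union of double cosets of the graph product indexed by the intersection of the two vertex subsets. Proceeding inductively, the above intersection is contained in a finite union $\bigcup_i c_i\, \Gamma_{\cap_v S_v}(G)\, d_i$. The decisive combinatorial input is that $\bigcap_{v\in V(\Gamma)} S_v=\emptyset$: indeed, no vertex $v$ lies in its own link $S_v$, because $\Gamma$ is simple and has no loops. Hence $\Gamma_{\cap_v S_v}(G)=\Gamma_\emptyset(G)$ is trivial by convention, and the intersection is contained in a finite set. But a non-principal ultrafilter cannot contain a finite set, which is the desired contradiction; the reverse inclusion $\bigcup_v X_{\Gamma_{S_v}(G)}\subset \Delta\Gamma(G)$ is trivial.

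The main obstacle I foresee is the bookkeeping in the iterated application of Lemma \ref{intersection}, specifically verifying that the $n$-fold intersection really collapses to a finite union of double cosets of $\Gamma_{\bigcap_v S_v}(G)$ (rather than acquiring extra subgraphs through successive iterations). Once this is checked, the proof reduces to the crisp combinatorial observation that $v\notin S_v$.
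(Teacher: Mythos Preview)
Your proposal is correct and follows essentially the same route as the paper: both reduce the statement to the finiteness of $\bigcap_{v} t_{1,v}\,\Gamma_{S_v}(G)\,t_{2,v}$, obtain this by iterating Lemma~\ref{intersection} to land inside a finite union of double cosets of $\Gamma_{\cap_v S_v}(G)$, and conclude using $\bigcap_v S_v=\emptyset$. The only cosmetic difference is that the paper phrases the reduction as $\bigcap_v c_0(\Gamma(G),\Gamma_{S_v}(G))=c_0(\Gamma(G))$ while you phrase it via ultrafilters; your worry about the bookkeeping in the iteration is not a real obstacle, since Lemma~\ref{intersection} applies to arbitrary subgraphs and hence can be reapplied at each step, exactly as the paper does in one line.
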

\begin{proof}
Set $V(\Gamma)=\{v_i\mid i=1, \dots ,n\}$ and $\Gamma_i=\Gamma_{S_{V_i}}$ for each $i$. Note that this statement is equivalent to $$\bigcap_{i=1}^n c_0(\Gamma(G),\Gamma_i(G))=c_0(\Gamma(G))$$ and thus it suffices to show $|\bigcap_{i=1}^n s_i \Gamma_i (G) t_i|<\infty$ for any $s_i, t_i\in \Gamma(G)$.

By Lemma \ref{intersection}, $\bigcap_{i=1}^n s_i \Gamma_i (G) t_i\subset \bigcup_{j=1}^m g_j (\bigcap_{i=1}^n \Gamma_i)(G) h_j$ for some finite subset $\{g_j, h_j\mid 1\leq j\leq m\}$ of $\Gamma(G)$.
Note that $\cap_{i=1}^n S_{V_i}=\emptyset$ and thus $(\bigcap_{i=1}^n \Gamma_i)(G)=\{e\}$, i.e., $\bigcap_{i=1}^n s_i \Gamma_i (G) t_i$ is finite.
\end{proof}






\begin{prop}\label{main theorem introduction 1}
Let $\Gamma$ be a finite  graph. The following are equivalent:
\begin{enumerate}
    \item $\Gamma$ satisfies the following condition: $r(\Gamma)$ is at least 2 
    and there does not exist a pair of vertices $v_1,v_2\in V(\Gamma)$ such that $(u,v_i)\in E(\Gamma)$ for all $u\in V(\Gamma)\setminus \{v_1,v_2\}$, and $i=1,2$. 
    \item For all choices of non-trivial groups $G_v$, $v\in V(\Gamma)$, the graph product $\Gamma(G)$ is properly proximal.
\end{enumerate}
\end{prop}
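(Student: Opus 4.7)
The plan is to treat the two implications separately, in both cases leveraging the amalgamated free product decomposition provided by Lemma~\ref{graph products are amalgamated free products}.

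For (1)~$\Rightarrow$~(2), I would fix a vertex $v \in V(\Gamma)$ and write $\Gamma(G) = G_1 \ast_H G_2$ with $G_1 = \Gamma_{S_v \cup \{v\}}(G) = G_v \times \Gamma_{S_v}(G)$, $H = \Gamma_{S_v}(G)$, and $G_2 = \Gamma_{V(\Gamma) \setminus \{v\}}(G)$. Clearly $[G_1 : H] = |G_v| \geq 2$. The key is to check that condition (1) forces $[G_2 : H] \geq 3$ for every $v$. The radius hypothesis $r(\Gamma) \geq 2$ guarantees that $V(\Gamma) \setminus (S_v \cup \{v\})$ is non-empty. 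If it contains two distinct vertices $w_1, w_2$, then $H, g_{w_1}H, g_{w_2}H$ (for non-trivial $g_{w_i} \in G_{w_i}$) are three distinct cosets by Theorem~\ref{normal form theorem for graph products}. If instead $V(\Gamma) \setminus (S_v \cup \{v\}) = \{w\}$ is a singleton, then applying the no-forbidden-pair clause to the pair $(v, w)$ produces some $u \in S_v$ with $(u, w) \notin E(\Gamma)$, and then $H, g_w H, g_u g_w H$ are three distinct cosets, again by the normal form theorem. Corollary~\ref{properly proximal relative to amalgams} then yields that $\Gamma(G)$ is properly proximal relative to $\Gamma_{S_v}(G)$ for every $v$. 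Combined with Lemma~\ref{boundary pieces associated to stars union to the whole piece} and the remark following Definition~\ref{relative proper proximality with states}, this gives proper proximality of $\Gamma(G)$.

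For (2)~$\Rightarrow$~(1), I would argue by contrapositive. If (1) fails, either $r(\Gamma) \leq 1$, so some $v$ is adjacent to all other vertices and $\Gamma(G) = G_v \times \Gamma_{S_v}(G)$, or there is a pair $v_1, v_2$ both adjacent to every vertex in $V(\Gamma) \setminus \{v_1, v_2\}$, in which case $\Gamma(G)$ splits as $(G_{v_1} \times G_{v_2}) \times \Gamma_{V(\Gamma) \setminus \{v_1, v_2\}}(G)$ when $(v_1, v_2) \in E(\Gamma)$, and as $(G_{v_1} \ast G_{v_2}) \times \Gamma_{V(\Gamma) \setminus \{v_1, v_2\}}(G)$ otherwise. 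In each case I would choose non-trivial $G_v$ so that the first factor is infinite amenable: take $G_v = \mathbb{Z}$ in the direct-product cases and $G_{v_1} = G_{v_2} = \mathbb{Z}/2\mathbb{Z}$ in the free-product case, yielding the amenable infinite dihedral group. Invoking the fact, to be established elsewhere in the paper, that proper proximality of a direct product passes to each factor, together with the non-proper-proximality of infinite amenable groups, this would contradict (2).

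I expect the main obstacle to be the coset-counting in the $|V(\Gamma) \setminus (S_v \cup \{v\})| = 1$ subcase of (1)~$\Rightarrow$~(2): this is precisely where the no-forbidden-pair clause is needed, ruling out the degenerate situation in which $G_2 = G_w \times H$ has index $|G_w| = 2$ over $H$ and Corollary~\ref{properly proximal relative to amalgams} would fail to apply.
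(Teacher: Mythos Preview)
Your proposal is correct and follows the same overall strategy as the paper: for $(1)\Rightarrow(2)$ you decompose at each vertex, apply Corollary~\ref{properly proximal relative to amalgams}, and glue via Lemma~\ref{boundary pieces associated to stars union to the whole piece}; for $(2)\Rightarrow(1)$ you build an explicit counterexample with an infinite amenable direct factor. Two minor remarks: your coset-counting argument showing $[G_2:H]\geq 3$ is more explicit than the paper's bare assertion that $\max\{[G_1:H],[G_2:H]\}=\infty$, and for the contrapositive you invoke Proposition~\ref{Products are properly proximal iff summands are properly proximal} whereas the paper appeals directly to inner amenability via \cite[Proposition~4.11]{BIP18} (either route works).
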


\begin{proof}[Proof of Proposition \ref{main theorem introduction 1}]
First, we show $(2)\implies (1)$. Suppose there is a radius 2 graph such that there is a pair of vertices $v_1,v_2$ satisfying  $(u,v_i)\in E(\Gamma)$ for all $u\in V(\Gamma)\setminus \{v_1,v_2\}$, and $i=1,2$, then one can simply consider $G_{v_1}= \mathbb Z/2\mathbb {Z}= G_{v_2}$. Since $(v_1,v_2)\notin E(\Gamma)$, we see that  $\Gamma(G)= \mb{Z}_2*\mb{Z}_2\times \Gamma_{V(\Gamma)\setminus \{v_1,v_2\}}(G)$ is inner amenable, and therefore not properly proximal by Proposition 4.11 in \cite{BIP18}.

For $(1)\implies (2)$, observe that by Lemma \ref{graph products are amalgamated free products}, we have a natural amalgamated free product decomposition: $\Gamma(G)\cong G_1 *_{H} G_2 $, using the same notation.
Since $\Gamma$ satisfies (1),  we have $\text{max}\{[G_1:H], [G_2:H]\}=\infty$ and $H$ is neither $G_1$ nor $G_2$.
Then it follows immediately from Lemma \ref{boundary pieces associated to stars union to the whole piece} and Corollary \ref{properly proximal relative to amalgams}.
\end{proof}

It follows directly that for an arbitrary choice of non-trivial groups indexed by vertices, $\Gamma(G)$ is properly proximal, provided that $\Gamma$ is an irreducible graph or a graph satisfying $r(\Gamma)\geq 3$.

\begin{prop}\label{main theorem introduction 2}
Let $\Gamma$ be such that $r(\Gamma)\geq 2$. Then $\Gamma(G)$ is either properly proximal or contains an infinite amenable summand.
\end{prop}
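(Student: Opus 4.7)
The plan is to case split according to whether Proposition \ref{main theorem introduction 1} applies. If its hypothesis is satisfied (there is no pair of vertices $v_1, v_2$ each adjacent to every vertex of $V(\Gamma) \setminus \{v_1, v_2\}$), then $\Gamma(G)$ is properly proximal and we are done. Otherwise, such a pair $v_1, v_2$ exists. The edge $(v_1, v_2)$ cannot lie in $E(\Gamma)$, for otherwise $v_1$ would be a universal vertex and $r(\Gamma) = 1$, contradicting the hypothesis $r(\Gamma) \geq 2$. The commutation relations of the graph product then yield the direct product decomposition
\[ \Gamma(G) \;\cong\; (G_{v_1} * G_{v_2}) \times \Gamma_S(G), \]
where $S := V(\Gamma) \setminus \{v_1, v_2\}$, since $v_1$ and $v_2$ each commute with every vertex of $S$ but not with each other.

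A case analysis on the orders of $G_{v_1}$ and $G_{v_2}$ concludes one half of the argument. If $|G_{v_1}| = |G_{v_2}| = 2$, then $G_{v_1} * G_{v_2} \cong D_\infty$ is infinite amenable, so $\Gamma(G)$ contains $D_\infty$ as an infinite amenable direct summand and we are done. Otherwise, without loss of generality $|G_{v_1}| \geq 3$, and Corollary \ref{main theorem amalgamated product} applied with the trivial amalgamating subgroup $H = \{1\}$ (so that $[G_{v_1}:H] \geq 3$ and $[G_{v_2}:H] \geq 2$) shows $G_{v_1} * G_{v_2}$ is properly proximal.

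In this remaining case $\Gamma(G)$ is a direct product of a properly proximal group and $\Gamma_S(G)$. Since proper proximality passes through direct products and since an infinite amenable direct summand of $\Gamma_S(G)$ is automatically an infinite amenable direct summand of $\Gamma(G)$, the dichotomy reduces to $\Gamma_S(G)$. I would complete the proof by induction on $|V(\Gamma)|$: one checks $r(\Gamma_S) \neq 1$, for a universal vertex of $\Gamma_S$ would, using that $v_1, v_2$ are universally adjacent to $S$, be universal in $\Gamma$, contradicting $r(\Gamma) \geq 2$. Hence either $r(\Gamma_S) \geq 2$, in which case the inductive hypothesis applies, or $|S| \leq 1$ as a base case. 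I foresee the main obstacle being the base case $|S| = 1$, where $\Gamma_S(G) = G_w$ is a single vertex group outside the purview of this proposition, and the dichotomy for $\Gamma(G)$ must be read directly off $G_w$.
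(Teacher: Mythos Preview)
Your argument is correct and follows the same route as the paper: decompose $\Gamma(G)$ as $(G_{v_1} * G_{v_2}) \times \Gamma_S(G)$, handle the free-product factor directly, and iterate on $\Gamma_S$. The obstacle you foresee at $|S| = 1$ does not arise: if $S = \{w\}$, then $w$ is adjacent to both $v_1$ and $v_2$ and hence to every vertex of $V(\Gamma) \setminus \{w\}$, forcing $r(\Gamma) = 1$ --- this is exactly the argument you already gave to rule out $r(\Gamma_S) = 1$, applied to the vacuously universal single vertex of $\Gamma_S$. Thus the only genuine base case is $|S| = 0$, where $\Gamma_S(G)$ is trivial and the dichotomy for $\Gamma(G) = G_{v_1} * G_{v_2}$ is already settled by your case analysis.
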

\begin{proof}
The only case to check is if $\Gamma$ has radius 2 and it doesn't satisfy condition (1) of Proposition \ref{main theorem introduction 1}. That is, there exists vertices $v_1,v_2$ such that $S= V(\Gamma)\setminus \{v_1,v_2\}$ is connected to $v_1$ and $v_2$. Then since $v_1$ is not connected to $v_2$ (else it violates the radius 2 condition), we have $\Gamma(G)\cong \left(G_{v_1}*G_{v_2}\right) \times \Gamma_{S}(G)$. The left summand is either infinite amenable ($\mb{Z}_2*\mb{Z}_2$) or properly proximal (since they are non-trivial free products). The above decomposition can be iterated on $S$ and further on, using the hypothesis. The result follows from the fact that properly proximal groups cannot have infinite amenable summands (\cite[Proposition 4.11]{BIP18}).
\end{proof}

We thank J. Peterson for suggesting the proof of  the following proposition. A generalization of the result below will appear in \cite{DKEP21}. The converse already appears as Proposition 4.10 as in \cite{BIP18}.
\begin{prop}\label{Products are properly proximal iff summands are properly proximal}
If $G=G_1\times G_2$ is properly proximal, then both $G_1$ and $G_2$ are properly proximal.
\end{prop}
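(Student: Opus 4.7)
The plan is to argue the contrapositive using the state-theoretic characterization (3) of Definition~\ref{relative proper proximality with states}: starting from a left $G_1$-invariant state $\varphi_1$ on $(\ell^\infty(G_1)/c_0(G_1))^{G_{1,r}}$, I would construct a left $G$-invariant state $\varphi$ on $(\ell^\infty(G)/c_0(G))^{G_r}$. Identifying $\ell^\infty(G) = \ell^\infty(G_1\times G_2)$ and writing $F_{g_2}(g_1):= F(g_1,g_2)$ for the slice of $F\in \ell^\infty(G)$ at $g_2 \in G_2$, the formula to try is
\[
\varphi([F]) := \varphi_1([F_{g_2}])
\]
for any choice of $g_2 \in G_2$.

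The two checks that make this formula well-defined are the heart of the plan. First, if $[F] \in (\ell^\infty(G)/c_0(G))^{G_r}$ then each slice $[F_{g_2}]$ lies in $(\ell^\infty(G_1)/c_0(G_1))^{G_{1,r}}$: the condition $F\cdot(h_1,e)-F\in c_0(G)$ forces the set of $(g_1,g_2')$ with $|F(g_1 h_1,g_2')-F(g_1,g_2')|>\varepsilon$ to be finite, so its intersection with any horizontal line $G_1\times\{g_2\}$ is also finite, giving $F_{g_2}\cdot h_1 - F_{g_2}\in c_0(G_1)$. Second, the class $[F_{g_2}]$ is independent of the choice of $g_2$: right $G_{2,r}$-invariance of $[F]$ modulo $c_0(G)$ yields $F_{g_2 h_2} - F_{g_2}\in c_0(G_1)$ for every $h_2\in G_2$, and since $G_2$ acts transitively on itself by right multiplication, all slices collapse to a single class in $\ell^\infty(G_1)/c_0(G_1)$.

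Granting these, $\varphi$ is visibly unital, positive, and vanishes on $c_0(G)$ (slices of $c_0(G)$-functions lie in $c_0(G_1)$). Left $G$-invariance is a direct computation: the identity $(h\cdot F)_{g_2}(g_1) = F(h_1^{-1}g_1, h_2^{-1}g_2) = (h_1\cdot F_{h_2^{-1}g_2})(g_1)$, together with left $G_1$-invariance of $\varphi_1$ and slice-independence, gives
\[
\varphi(h\cdot [F]) = \varphi_1\bigl([h_1\cdot F_{h_2^{-1}g_2}]\bigr) = \varphi_1\bigl([F_{h_2^{-1}g_2}]\bigr) = \varphi([F]).
\]
The argument for $G_2$ is symmetric, completing the contrapositive.

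The conceptual point worth flagging is the slice-independence step: it uses in an essential way that $G_1$ and $G_2$ commute inside $G$, so that right multiplication by $(e,h_2)$ shifts only the $g_2$-coordinate and the $c_0(G)$-defect restricts cleanly to individual horizontal slices. This is also the feature that would fail for general group extensions, which presumably explains why a genuine generalization beyond direct products (as announced in \cite{DKEP21}) calls for additional input.
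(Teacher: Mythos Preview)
Your argument is correct. The slicing map $[F]\mapsto[F_{g_2}]$ is well defined from $(\ell^\infty(G)/c_0(G))^{G_r}$ to $(\ell^\infty(G_1)/c_0(G_1))^{G_{1,r}}$, it is independent of $g_2$ by right $(e,h_2)$-invariance, and composing with $\varphi_1$ gives the desired left $G$-invariant state. All the checks you list go through exactly as written.

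This is a genuinely different route from the paper's. The paper works in the bidual (characterization~(4) of Definition~\ref{relative proper proximality with states}) and establishes a structural isomorphism
\[
\bigoplus_{G_2}^\infty (\ell^\infty G_1/c_0(G_1))^{**}\cong p(\ell^\infty G/c_0(G))^{**},\qquad p=\bigvee_{g\in G_2}1_{G_1\times\{g\}},
\]
then passes to right $G$-invariants. Your argument stays at the level of characterization~(3), avoids the double dual entirely, and is considerably more elementary: the slicing map is just restriction along a horizontal line, with no SOT limits or projections in enveloping von Neumann algebras needed. What the paper's approach buys is a \emph{spatial} identification of the relevant corner of the bidual with a concrete direct sum, which is the kind of structural statement one would want in hand to attack more general extensions (and indeed the paper flags that a generalization appears in \cite{DKEP21}). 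Your argument, by contrast, is tailored to direct products---the slice-independence step leans exactly on the fact that right translation by $(e,h_2)$ moves only the second coordinate---but for this proposition that is all that is needed.
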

\begin{proof}
Define the projection $p=\bigvee_{g\in G_2}1_{G_1\times\{g\}}\in \left(\ell^{\infty}(G)/c_0(G)\right)^{**}$. We claim the following isomorphism holds: $$\bigoplus_{G_2}^\infty (\ell^\infty G_1/c_0(G_1))^{**}\cong p(\ell^\infty G/c_0(G))^{**},$$ 
where the direct sum is equipped with $\ell^\infty$-norm.
Indeed, for any finite subset $F\subset G_2$, consider the isomorphism
\[
\Theta_F:\bigoplus_{F} \ell^\infty G_1\ni (f_h)_{h\in F}\mapsto \tilde f_F\in 1_{G_1\times F}\ell^\infty G,
\]
where $\tilde f_F(g,h)=f_h(g)$ for $g\in G_1,$ $h\in F$.
Composing with the quotient map, one has an isomorphism $\Theta_F:\bigoplus_{F} (\ell^\infty G_1/c_0(G_1))\to 1_{G_1\times F}(\ell^\infty G/c_0(G))$.
Furthermore, we may extend this map to a weak$^\ast$ continuous isomorphism $\tilde\Theta_F: \bigoplus_{F} (\ell^\infty G_1/c_0(G_1))^{**}\to 1_{G_1\times F}(\ell^\infty G/c_0(G))^{**}$.

For each $(f_h)_{h\in G_2}\in \bigoplus_{G_2} (\ell^\infty G_1/c_0(G_1))^{**}$,
define $\tilde\Theta ((f_h))$ to be the SOT limit point of the net $\{\tilde\Theta_F((f_h)_{h\in F})\}_F$ in $p(\ell^\infty G/c_0(G))^{**}$, where $F$ ranges over all finite subsets of $G_2$;
conversely, for $p g\in p(\ell^\infty G/c_0(G))^{**}$, set $\tilde\Theta^{-1}(pg)$ to be the weak$^\ast$ limit of $\{\tilde\Theta_F^{-1}(1_{G_1\times F} g)\}_F$ in $\bigoplus_{G_2} (\ell^\infty G_1/c_0(G_1))^{**}$.
Thus $\tilde \Theta$ implements the required isomorphism, which is also $G$-equivariant by construction.

Now by taking the $G$-right invariant subspaces in both sides, we have $$\big((\ell^\infty G_1/c_0(G_1))^{**}\big)^{{G_1}_r}=\left(\bigoplus_{G_2}^\infty (\ell^\infty G_1/c_0(G))^{**}\right)^{G_r}\cong p\left((\ell^\infty G/c_0(G))^{**}\right)^{G_r}.$$ 
Now suppose there is a left $G_1$-invariant state $\varphi$ on $\big((\ell^\infty G_1/c_0(G_1))^{**}\big)^{{G_1}_r}$, then $\varphi\circ\tilde\Theta^{-1}$ is a left $G$-invariant state on $p\left((\ell^\infty G/c_0(G))^{**}\right)^{G_r}$. Set $\psi(f)=\varphi\circ\tilde\Theta^{-1}(pf)$ for $f\in \left((\ell^\infty G/c_0(G))^{**}\right)^{G_r}$ and we obtain a $G$-invariant state, which contradicts the proper proximality of $G$.
\end{proof}
\begin{proof}[Proof of Theorem \ref{main theorem introduction 3}]
 Suppose $\Gamma$ satisfies condition (1) in Proposition \ref{main theorem introduction 1}, then clearly $\Gamma(G)\in \mc{G}_{pp} $ if and only if $\Gamma(G)$ is properly proximal. If $\Gamma$ is of radius 2, and it doesn't satisfy condition (1) of Theorem \ref{main theorem introduction 1}, then $\Gamma(G)$ can be decomposed as $\Gamma_{\{v_1,v_2\}}(G)\times \Gamma_{\{v_1,v_2\}^c}(G)$. Hence by Proposition \ref{Products are properly proximal iff summands are properly proximal}, it follows that $\Gamma(G)$ is properly proximal if and only if $|G_{v_i}|\geq 3  $ for some $i\in \{1,2\}$, and $\Gamma_{\{v_1,v_2\}^c}(G)\in \mc{G}_{pp}$. Finally suppose $\Gamma$ has radius 1. Then the group decomposes as $\Gamma(G)= \Gamma_{v}(G)\times \Gamma_{\{v\}^c}(G)$, whence we apply the same argument as before. 
\end{proof}

\begin{remark}
In \cite{HHL20} it is shown that proper CAT(0) cubical complex groups are properly proximal. Graph products admit natural actions on CAT(0) cube complexes due to \cite{Mei96} and \cite{Dav98}, however these actions are in general, not proper. Thus Theorem \ref{main theorem introduction 3} cannot be deduced directly from \cite{HHL20}. 
\end{remark}




\bibliographystyle{amsalpha}
\bibliography{ref}

\end{document}